\newtheorem{de}{Definition}[section]}
\newtheorem{theo}[de]{\textsc{Theorem}}}
\newtheorem{prop}[de]{Proposition}}
\newtheorem{lem}[de]{Lemma}}
\newtheorem{cor}[de]{Corollary}}
\theoremstyle{remark}
\newtheorem{remar}[de]{Remark}}
\theoremstyle{remark}
\theoremstyle{remark}
\theoremstyle{remark}
\newtheorem{nota}[de]{Notation}}
\newcommand{\C}{\ensuremath{\mathbb{C}}}
\newcommand{\co}{\ensuremath{\mathscr{O}}}
\newcommand{\wt}[1]{\ensuremath{\widetilde{#1}}}
\newcommand{\m}{\ensuremath{\mathfrak{m}}}
\newcommand{\mc}[1]{\ensuremath{\mathcal{#1}}}
\newcommand{\lrp}[1]{\ensuremath{\left( #1\right)}}
\newcommand{\lra}[1]{\left\{#1\right\}}
\newcommand{\Cxy}{\ensuremath{\C\{x,y\}}}
\newcommand{\val}{\ensuremath{\mathrm{val}}}
\newcommand{\Z}{\ensuremath{\mathbb{Z}}}
\newcommand{\undun}{\underline{1}}
\newcommand{\N}{\ensuremath{\mathbb{N}}}
\newcommand{\unp}{\left\{1,\ldots,p\right\}}
\title{{Symmetry of maximals for fractional ideals of curves}}
\author[D.~Pol]{Delphine Pol}
\thanks{Research supported by a Japan Society for the Promotion of Science (JSPS) Postdoctoral Fellowship (Short-term) for North American and European Researchers.}
\address{
Delphine Pol\\
Department of Mathematics, Hokkaido University\\
 Kita 10, Nishi 8, Kita-ku\\
 Sapporo 060-0810\\
 Japan
 }
 \email{\href{pol@math.sci.hokudai.ac.jp}{pol@math.sci.hokudai.ac.jp}}
\date{\today}
\subjclass{14H20 (Primary), 14H50, 20M12}
\keywords{values, singular curves, fractional ideal, duality}
\newcommand{\supe}{>}
\newcommand{\infe}{<}
\begin{document}

\begin{abstract}
The purpose of this paper is to extend the symmetry of maximals of the ring of a germ of reducible plane curve proved by Delgado to a relation between the relative maximals of a fractional ideal and the absolute maximals of its dual for any admissible ring. In particular, it includes the case of germs of reduced reducible curve of any codimension. We then apply this symmetry to characterize the elements in the set of values of a fractional ideal from some of its projections and the irreducible absolute maximals of the dual ideal.
\end{abstract}

\maketitle

\section{Introduction}

Let $C$ be the germ of an irreducible plane curve with reduced local ring $\co_C$. If $t\in (\C,0)\mapsto (x(t),y(t))\in (\C^2,0)$ is a parametrization of the curve $C$, we define the valuation of an element $f\in\C\lra{x,y}$ as the order of $t$ of the series $f(x(t),y(t))$. For a germ of plane curve with $p$  branches, we define the value of an element $f\in\Cxy$ as the $p$-uple of its valuation along each branch. The semigroup is then the set of the values of the non zero divisors of $\co_C$, and it is a subset of $\Z^p$. 

\smallskip

The semigroup of a plane curve determines the equisingularity class of the curve (see \cite{zariskicourbe} and \cite{waldithese}). The explicit computation of the semigroup is considered for example in \cite{zariskicourbe}, \cite{campillo-delgado-gusein-generators} and \cite{delgado2}. In the irreducible case, the generators of the semigroup are determined by the characteristic exponents of the curve. The approach suggested in \cite{delgado2} for reducible plane curves is an inductive procedure which allows us to determine the semigroup of a plane curve with $p$ branches provided that we know the semigroups of the curves with $p-1$ components obtained by removing one of the components of $C$, and the values of maximal contact.

\smallskip

One can also define the set of values of any fractional ideal $I\subseteq \mathrm{Frac}(\co_C)$, where $\mathrm{Frac}(\co_C)$ is the total ring of fractions of $\co_C$. For a fraction $\frac{a}{b}\in\mathrm{Frac}(\co_C)$, we set $\val\lrp{\frac{a}{b}}=\val(a)-\val(b)$.  The set of values $\val(I)$ of $I$ is then the set of values of the non zero divisors of $I$. One can notice that this set is an ideal over the semigroup $\val(\co_C)$: if $a\in\val(I)$  and $b\in\val(\co_C)$, then $a+b\in\val(I)$. 

\smallskip

In the irreducible case, a standard basis of $I$ can be determined by the algorithm \cite[Theorem 2.4]{hefez-standard} (see definition~\ref{de:base:standard} for the notion of standard basis). The set of valuations of $I$ can then be deduced from this standard basis. In particular, this algorithm is used in \cite{hefez} for the computation of the set of values of K\"ahler differentials, which is a key ingredient for the analytic classification of plane branches.  The set of values of the Jacobian ideal and the set of values of its dual, namely the module of logarithmic residues, are studied in \cite{polcras} and \cite{polvalues} (see definition~\ref{de:dual} for the notion of dual). We suggest in \cite[\S 4.3.3]{polvalues} an algorithm for the computation of the values of the module of logarithmic residues for curves with exactly two branches which uses the algorithm \cite[Theorem 2.4]{hefez-standard}. However, this procedure cannot be extended to curves with three or more branches. 

\medskip

The computation of the semigroup of a plane curve given in \cite{delgado2} is based on a symmetry between two particular kinds of elements of the semigroup, which are called \emph{relative maximals} and \emph{absolute maximals} (see \ref{a:de:maximals} for the definitions). This result is related to the symmetry property of the semigroup of a Gorenstein curve proved by Delgado in \cite{delgado}. We extend the latter symmetry to any fractional ideal of a plane curve and a Gorenstein curve in respectively \cite{polcras} and  \cite{polvalues}, and then it is extended to any fractional ideal of more general rings called \emph{admissible rings} in \cite{kts}.

\medskip

The purpose of this paper is to study the set of values of a fractional ideal $I$ of an admissible ring, and in particular the properties of the set of its maximals.% with particular attention to the set of maximals. The main theorem is:

\begin{theo}
\label{intro:theo:sym-max}
Let $R$ be an admissible local ring and let $I\subseteq\mathrm{Frac}(R)$ be a fractional ideal. Let $\alpha\in\val(I)$ and $\beta\in\val(I^\vee)$. We assume that $\alpha+\beta=\gamma-\undun$, where $\gamma$ is the conductor of $R$ (see definition~\ref{de:conductor}).
Then $\beta$ is an absolute maximal of $I^\vee$ if and only if $\alpha$ is a relative maximal of $I$.
\end{theo}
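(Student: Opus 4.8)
The plan is to reduce the statement to the combinatorial description of $\val(I^{\vee})$ furnished by the duality theorem for fractional ideals of admissible rings. Recall (from \cite{kts}, which extends the plane-curve and Gorenstein cases of \cite{polcras} and \cite{polvalues}) that for any fractional ideal $J\se\mathrm{Frac}(R)$,
\[
\val(J^{\vee})=\bigl\{\delta\in\Z^{p}\ :\ \Delta\bigl(\gamma-\undun-\delta\bigr)\cap\val(J)=\vide\bigr\},
\]
where for $\epsilon\in\Z^{p}$ one sets $\Delta_{i}(\epsilon)=\{x\in\Z^{p}:x_{i}=\epsilon_{i}\text{ and }x_{j}\supe\epsilon_{j}\text{ for all }j\neq i\}$ and $\Delta(\epsilon)=\bigcup_{i=1}^{p}\Delta_{i}(\epsilon)$; the relation is symmetric, so it also reads $\val(J)=\{\delta:\Delta(\gamma-\undun-\delta)\cap\val(J^{\vee})=\vide\}$. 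I would begin by feeding the two hypotheses into it: from $\alpha\in\val(I)$ and $\beta=\gamma-\undun-\alpha$ we get $\Delta(\beta)\cap\val(I^{\vee})=\vide$, and from $\beta\in\val(I^{\vee})$ and $\alpha=\gamma-\undun-\beta$ we get $\Delta(\alpha)\cap\val(I)=\vide$. Thus $\alpha$ and $\beta$ are already maximal points of $\val(I)$ and $\val(I^{\vee})$ in the basic sense of \ref{a:de:maximals}, and the content of the theorem is the matching of the refined conditions ``relative'' versus ``absolute'' recorded there.

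Next I would unwind those refined conditions and translate them through the duality. By \ref{a:de:maximals} each is described by the emptiness, in the appropriate value set, of a family of fibers $\Delta_{i}$ based at suitable neighbours of the point; applying the displayed description to such a neighbour $\delta$ of $\beta$ (or of $\alpha$) turns ``$\delta\in\val(I^{\vee})$'' into ``$\Delta(\gamma-\undun-\delta)\cap\val(I)=\vide$''. The key observation is a direct one: if $x$ lies in $\Delta_{i}(\beta)\cap\val(I^{\vee})$ then, writing out coordinates and using $\alpha+\beta=\gamma-\undun$, the point $\alpha$ itself lies in $\Delta_{i}(\gamma-\undun-x)$; hence $\Delta(\gamma-\undun-x)\cap\val(I)\neq\vide$, which by the duality contradicts $x\in\val(I^{\vee})$. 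Running this coordinatewise trick in the appropriate direction for each $i$, and reading it off in reverse, is what produces the two implications. Where one must exhibit, rather than exclude, an element of $\val(I)$ or of $\val(I^{\vee})$, I would construct it using the structure of a value set — closure under coordinatewise minima, together with the axiom that two elements agreeing in a coordinate admit a common element strictly larger in that coordinate and dominating their minimum off it — taking the witness to be a minimum of two suitably chosen elements.

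The step I expect to be the main obstacle is precisely this bookkeeping when $p\ge 3$: in $\Delta(\gamma-\undun-x)$ and in the neighbour-fibers there appear, besides the one-direction pieces, mixed fibers of dimension $p-2$, and one must both exclude spurious elements of $\val(I)$ in them — using the already-established $\Delta(\alpha)\cap\val(I)=\vide$ and the good-semigroup axioms — and, in the converse direction, manufacture the required element in exactly one of them. For $p\le 2$ these mixed fibers are empty and the statement follows in a line from the duality, so the genuine work is the combinatorial organization of which coordinates are held fixed and which are pushed for general $p$. The remaining point to verify is the normalization, namely that $\gamma-\undun$ with $\gamma$ the conductor of $R$ (Definition~\ref{de:conductor}) is the shift vector making the duality attached to $I^{\vee}$ (Definition~\ref{de:dual}) symmetric; this is part of the cited duality statement.
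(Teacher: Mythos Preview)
Your framework is right and matches the paper's: the duality theorem (Theorem~\ref{symmetry-values}) immediately yields $\Delta(\alpha,I)=\emptyset$ and $\Delta(\beta,I^{\vee})=\emptyset$, so both points are already maximals in the basic sense, and the theorem is about matching the refined notions. Two remarks on the easy direction ``$\alpha$ relative $\Rightarrow$ $\beta$ absolute'': first, your ``key observation'' as written only rules out $\Delta_{i}(\beta,I^{\vee})$, which is the basic maximal condition you have already derived; to rule out $\Delta_{A}(\beta,I^{\vee})$ for a general proper nonempty $A$ you must use the refined fibers on the $I$ side. The paper's one-line argument (Lemma~\ref{a:lem:2.8}) is: given $\eta\in\Delta_{A}(\beta,I^{\vee})$, pick $i\in A$, set $J=A^{c}\cup\{i\}$ so $|J|\ge 2$, take $\mu\in\Delta_{J}(\alpha,I)$ from the relative-maximal hypothesis, and observe $\eta+\mu\in\Delta_{i}(\gamma-\undun,K^{0})=\emptyset$. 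This is close in spirit to your trick but needs the pairing $A\leftrightarrow A^{c}\cup\{i\}$, not a single coordinate.

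The genuine gap is the converse, and your sketch underestimates it. Duality does convert ``$\beta$ absolute maximal'' into existence statements on the $I$ side --- for every $\delta\ge 0$ with proper nonempty support, $\beta+\delta\notin\val(I^{\vee})$, hence $\Delta(\alpha-\delta,I)\neq\emptyset$ --- so you do get many elements of $\val(I)$. But each such witness lies in some $\Delta_{k}(\alpha-\delta,I)$ with \emph{no control over which $k$}, and its coordinates outside $\{k\}$ may fall well below $\alpha$; taking an infimum of two of them produces an element with the correct values in two coordinates but arbitrary (possibly very small) values elsewhere, not an element of $\Delta_{\{1,2\}}(\alpha,I)$. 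The climb axiom does not repair this, because applying it to such an infimum and $\alpha$ pushes you \emph{away} from the coordinates you want fixed. The paper does not construct a witness at all. It argues by contradiction: if $\Delta_{\{1,2\}}(\alpha,I)=\emptyset$ (which Lemma~\ref{a:lem:relmax} shows is what ``not relative maximal'' forces), then duality gives $\Delta(v,I^{\vee})\neq\emptyset$ for every $v$ in the infinite set $\mc{E}=\{v:v_{1}=\beta_{1},\,v_{2}=\beta_{2},\,v_{\ell}<\beta_{\ell}\text{ for }\ell\ge 3\}$. One then locates a \emph{maximal} $\eta\in\mc{E}$ with $\Delta_{1}(\eta,I^{\vee})\neq\emptyset$ (Lemmas~\ref{b:lem:12}--\ref{b:lem:eta}), and from $\eta$ builds a sequence $v^{1},v^{2},\ldots\in\mc{E}$ together with a nested chain of nonempty index sets $J_{1}\supseteq J_{2}\supseteq\cdots$ such that the $J_{q}$-coordinates of $v^{q}$ decrease by one at each step while $\Delta(v^{q},I^{\vee})$ remains nonempty (Propositions~\ref{b:prop:initialisation} and~\ref{prop:ite}); eventually this contradicts $\val(I^{\vee})\subseteq\lambda+\N^{p}$. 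The delicate part is precisely controlling, at each stage, which coordinates of the surviving witnesses are pinned and which are free --- the ``bookkeeping'' you anticipated, but carried out as an infinite descent rather than a finite assembly.
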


We then use this symmetry to investigate the computation of the set of values of a fractional ideal.

\medskip

Let us describe the content of this paper. 

\smallskip

 In section~\ref{sec:notations}, we recall several properties of the set of values of a fractional ideal which will be used in the next sections.

\smallskip

Section~\ref{sec:symmetry-max} is devoted to the proof of the main theorem~\ref{intro:theo:sym-max}. The proof of this theorem uses the symmetry theorem of \cite{polvalues} and \cite{kts} (see theorem~\ref{symmetry-values}) and properties of the set of values of fractional ideals. In particular, our proof gives an alternative proof of \cite[Theorem 2.10]{delgado2} without induction on the number of branches.  The proof in the basic case of Gorenstein curves relies on \cite{polvalues}, and the proof in the general case, as presented here using \cite{kts}, is very similar.

\smallskip

In section~\ref{sec:computation}, we investigate the computation of the set of values of a fractional ideal using induction on the number of branches as it is done in \cite{delgado2} for the semigroup of the curve. The generation theorem \cite[Theorem 1.5]{delgado2} can be generalized to any fractional ideal, and is not specific to plane curves (see theorem~\ref{theo:gen}). This theorem gives a characterization of the set of values of $I$ from some projections of  $\val(I)$ and the relative maximals of $I$. Thanks to theorem~\ref{intro:theo:sym-max}, determining the set of the relative maximals of $I$ is equivalent to determining the set of the absolute maximals of $I^\vee$. In subsection~\ref{subsection:irred:abs:max}, we study the set of the  absolute maximals of an ideal in the case of germs of analytic curves.

\subsection*{Acknowledgments.} The author is grateful to Felix Delgado for pointing out this question, to Michel Granger for useful discussions and comments, and to Mathias Schulze for his suggestion to consider admissible rings. The author also wants to thank Laura Tozzo and Philipp Korell for helpful comments and suggestions.

\section{Notations and preliminary results}
\label{sec:notations}

We recall in this section definitions and  properties from \cite{delgado2}, \cite{delgado}, \cite{polvalues} and \cite{kts} which will be used in the rest of this paper.

\subsection{Setup}

Let $C$ be the germ of  a reduced complex analytic curve, with $p$ irreducible components $C_1,\ldots,C_p$. We denote by $\co_C$ the reduced ring of $C$. The ring $\co_{C_i}$ of the branch $C_i$ is a one-dimensional integral domain, so that its normalization $\co_{\wt{C}_i}$ is isomorphic to $\C\lra{t_i}$ (see for example \cite[Corollary 4.4.10]{dejong}). The total ring of fractions of $\co_C$ satisfies (see \cite{dejong} for example): 
$$\mathrm{Frac}(\co_C)=\mathrm{Frac}(\co_{\wt{C}})=\bigoplus_{i=1}^p \mathrm{Frac}(\C\lra{t_i}).$$

\begin{de}
\label{de:value}
Let $g\in \mathrm{Frac}(\co_C)$. We define the \emph{valuation} of $g$ along the branch $C_i$ as the order of $t_i$ of the image of $g$ by the map $\mathrm{Frac}(\co_C)\to \mathrm{Frac}(\C\lra{t_i})$.  We denote the valuation of $g$ along $C_i$ by $\val_i(g)\in\Z\cup \lra{\infty}$, with the convention $\val_i(0)=\infty$.

We then define the \emph{value} of $g$ by $\val(g):=\lrp{\val_1(g),\ldots,\val_p(g)}\in \lrp{\Z\cup\lra{\infty}}^p$.  
\end{de}

The previous definition can be extended to more general rings introduced in \cite{kts}, which are called \emph{admissible rings}. We recall here the definition, and we refer to \cite{kts} for more details. We will only consider the local case. Properties of a semilocal ring can be deduced from the local case thanks to \cite[Theorem 3.2.2]{kts}.

We denote by $|\cdot|$ the cardinality of a set.

\begin{de}
\label{de:admissible}
Let $(R,\m)$ be a one dimensional Noetherian local Cohen-Macaulay ring. The ring $R$ is called \emph{admissible} if:
\begin{itemize}
\item the completion $\widehat{R}$ of $R$ is reduced,
\item the integral closure $\wt{R}$ of $R$ in $\mathrm{Frac}(R)$ satisfies $\wt{R}/\mathfrak{n}=R/\mathfrak{n}\cap R$ for any maximal ideal $\mathfrak{n}$ of $\wt{R}$,
\item we have $|R/\m|\geqslant |\mathscr{V}|$, where $\mathscr{V}$ is the set of discrete valuation rings of $\mathrm{Frac}(R)$ over $R$.
\end{itemize}

A value map $\val : \mathrm{Frac}(R)\to (\Z\cup \lra{\infty})^{|\mathscr{V}|}$ can be defined using the set of discrete valuation rings (see \cite[Definition 3.1.2]{kts}).
\end{de}

In particular, the ring $\co_C$ of the germ of a reduced curve is admissible, and the value map is the one defined in definition~\ref{de:value}. 

\medskip

We fix $R$ a local admissible ring,  we set $p=|\mathscr{V}|$, and $\val : \mathrm{Frac}(R)\to (\Z\cup\lra{\infty})^p$ the value map.

\begin{de}
\label{fracideal}
Let $I\subset  \mathrm{Frac}(R)$ be an $R$-module. We call $I$ a \emph{fractional ideal} if there exists a non zero divisor $g\in R$ such that $gI\subseteq R$ and if $I$ contains a non zero divisor of $\mathrm{Frac}(R)$. 
 We set:
$$\val(I):=\lra{\val(g)\vert g\in I \text{ non zero divisor }}\subset \Z^p.$$
\end{de}

For $I,J$ ideals in $\mathrm{Frac}(R)$, we set $(I:J)=\lra{a\in R \vert aJ\subseteq I}$.

\begin{de}
\label{canonical}
Let $K\subset \mathrm{Frac}(R)$ be a fractional ideal. We say that $K$ is a \emph{canonical ideal} if for all fractional ideals $I\subseteq \mathrm{Frac}(R)$, we have $$(K:(K:I))=I.$$

The ring $R$ is called Gorenstein if $R$ is a canonical ideal. 
\end{de}

\begin{prop}[\protect{\cite[Corollary 5.1.7]{kts}}]
\label{prop:kts:k0}
There exists a unique canonical ideal $K^0$ up to multiplication by an invertible element of $\wt{R}$ such that $$R\subseteq K^0\subseteq \wt{R}.$$
\end{prop}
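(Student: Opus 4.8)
The plan is to obtain $K^0$ from the classical canonical module of $R$ and then to kill the scaling ambiguity by imposing the normalization $R\subseteq K^0\subseteq\wt{R}$. \emph{Existence.} First I would note that, $R$ being one-dimensional, Noetherian, local and Cohen--Macaulay with $\widehat{R}$ reduced, the ring $R$ is reduced, hence generically Gorenstein (the localizations at the finitely many minimal primes are fields), and analytically unramified, so that $\wt{R}$ is a finite $R$-module. Under these hypotheses $R$ admits a canonical module $\omega_R$ (this is part of what the admissibility hypotheses secure; it holds for instance for analytic or excellent rings), which is a maximal Cohen--Macaulay $R$-module of rank one; fixing an isomorphism $\omega_R\otimes_R\mathrm{Frac}(R)\cong\mathrm{Frac}(R)$ realizes $\omega_R$ as a fractional ideal $K\subset\mathrm{Frac}(R)$. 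I would then check that any such $K$ is a canonical ideal in the sense of definition~\ref{canonical}: every fractional ideal $I$ is maximal Cohen--Macaulay (if $gI\subseteq R$ with $g$ a non zero divisor, then $gI$ is an ideal containing a non zero divisor, hence of depth $1=\dim R$, and $I\cong gI$), so for such $I$ the module $\mathrm{Hom}_R(I,K)$ is identified inside $\mathrm{Frac}(R)$ with $(K:I)$ (a map $I\to K$ becomes, after tensoring with $\mathrm{Frac}(R)$, the multiplication by some $q\in\mathrm{Frac}(R)$ with $qI\subseteq K$), and the biduality isomorphism $\mathrm{Hom}_R(\mathrm{Hom}_R(I,\omega_R),\omega_R)\cong I$ for maximal Cohen--Macaulay modules translates into $(K:(K:I))=I$.

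\emph{Normalization.} Given a canonical ideal $K$ as above, I would produce a non zero divisor $a\in K$ with $R\subseteq a^{-1}K\subseteq\wt{R}$; then $K^0:=a^{-1}K$ is still a canonical ideal (being a $\mathrm{Frac}(R)^\times$-multiple of $K\cong\omega_R$) and it satisfies the required inclusions. The key point is that $\val(K)\subseteq\Z^p$ possesses a least element attained by a non zero divisor of $K$: the set $\val(K)$ is nonempty, it is bounded below in each coordinate (from $K\subseteq g^{-1}R$ for some non zero divisor $g$ one gets $\val_i(y)\geqslant -\val_i(g)$ for every $y\in K$), and it is closed under componentwise minimum, since for non zero divisors $g,h\in K$ and all but finitely many $\lambda\in R$ the element $g+\lambda h$ is again a non zero divisor of $K$ with $\val(g+\lambda h)=\min(\val(g),\val(h))$, and $R$ is infinite. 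Choosing $a\in K$ a non zero divisor with $\val(a)=\min\val(K)$, the same genericity argument (comparing an arbitrary $y\in K$ with a non zero divisor of $K$) shows $\val_i(y)\geqslant\val_i(a)$ for every $y\in K$ and every $i$, hence $a^{-1}K\subseteq\wt{R}$; and $a\in K$ gives $aR\subseteq K$, i.e. $R\subseteq a^{-1}K$.

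\emph{Uniqueness.} The canonical module is unique up to $R$-isomorphism, and an $R$-isomorphism between two fractional ideals is the multiplication by an element of $\mathrm{Frac}(R)^\times$ (after tensoring with $\mathrm{Frac}(R)$ it becomes an automorphism of $\mathrm{Frac}(R)$, both ideals containing a non zero divisor). Thus any two canonical ideals are $\mathrm{Frac}(R)^\times$-multiples of one another; and if $K^0_1=uK^0_2$ with $R\subseteq K^0_j\subseteq\wt{R}$, then $uR\subseteq uK^0_2=K^0_1\subseteq\wt{R}$ forces $u\in\wt{R}$, and symmetrically $u^{-1}\in\wt{R}$, so $u$ is a unit of $\wt{R}$, which gives the asserted uniqueness.

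The genuinely substantial ingredient is the existence (and uniqueness up to isomorphism) of the canonical module of $R$ together with its realization as a fractional ideal; this is exactly what the hypotheses defining an admissible ring are designed to provide, and it is the step I expect to be the main obstacle --- indeed the statement is recorded in~\cite{kts} as a corollary of the general existence theorem for canonical ideals proved there. Once that input is available, the scaling into $[R,\wt{R}]$ and the refinement of the uniqueness from $\mathrm{Frac}(R)^\times$ to $\wt{R}^\times$ are formal; the only mild subtlety is the interaction between the zero divisors of $K$ and its value set, which the genericity argument handles.
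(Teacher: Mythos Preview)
The paper does not give a proof of this proposition; it is simply quoted from \cite[Corollary 5.1.7]{kts}. Your outline is a correct and standard route: realize the canonical module $\omega_R$ as a fractional ideal $K$, rescale by a non zero divisor $a\in K$ of minimal value so that $R\subseteq a^{-1}K\subseteq\wt{R}$, and deduce the refined uniqueness from the uniqueness of $\omega_R$ up to isomorphism together with the observation that a unit of $\mathrm{Frac}(R)$ carrying one ideal sandwiched in $[R,\wt{R}]$ to another must lie in $\wt{R}^\times$.

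Two remarks on points you leave implicit. First, the existence of a canonical module for an admissible ring in the sense of definition~\ref{de:admissible} is not a formal consequence of the listed axioms: nothing there forces $R$ to be a homomorphic image of a Gorenstein ring. You correctly flag this as the substantial input supplied by \cite{kts}, and that is exactly right --- the rest of your argument is routine once this is granted. Second, in the uniqueness step you silently identify ``canonical ideal in the sense of definition~\ref{canonical}'' with ``fractional ideal isomorphic to $\omega_R$''. This equivalence is classical (Herzog--Kunz), but it is an additional ingredient; without it one does not immediately know that two ideals satisfying the biduality property of definition~\ref{canonical} are $R$-isomorphic. Your handling of zero divisors in the normalization step (perturbing by a non zero divisor lying deep in the maximal ideal so as not to disturb the finite valuations) is sketched rather than written out, but the idea is sound.
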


For a Gorenstein ring $R$, we have $K^0=R$.

\begin{de}
\label{de:dual}
Let $I\subset \mathrm{Frac}(R)$ be a fractional ideal. The \emph{dual} of $I$ is:
$$I^\vee:=(K^0:I).$$

In particular, from the definition of $K^0$, we have $(I^\vee)^\vee=I$. 
\end{de}

\begin{remar}
We also have $I^\vee\simeq \mathrm{Hom}_R\lrp{I,K^0}$ (see for example \cite[Proof of Lemma 1.5.14]{dejong}).
\end{remar}

\begin{de}
\label{de:conductor}
The \emph{conductor ideal} of $R$ is $\mc{C}_R=\wt{R}^\vee$. In particular, there exists $\gamma\in\N^p$ such that $\mc{C}_R=t^\gamma\wt{R}$ and $\val(\mc{C}_R)=\gamma+\N^p$. We call $\gamma$ the \emph{conductor} of the ring $R$.
\end{de}

\subsection{Properties of the set of values of fractional ideals}

 Let $I\subseteq \mathrm{Frac}(R)$ be a fractional ideal. From the definition of a fractional ideal, one can notice that there exists $\lambda\in\Z^p$ such that \begin{equation}
\label{eq:lambda}
\val(I)\subseteq \lambda+\N^p.
\end{equation}

By \cite[Proposition 3.1.9]{kts}, the set of values of any fractional ideal $I\subseteq \mathrm{Frac}(R)$ is a \emph{good semigroup ideal}, which means that we have the following properties.

\begin{lem}[\protect{\cite[Proposition 3.1.9 (b)]{kts}}]
\label{a:lem:lambda:mu}
Let $I\subseteq \mathrm{Frac}(R)$ be a fractional ideal. There exists $\nu\in\Z^p$ such that:
$$\nu+\N^p\subseteq\val(I).$$
\end{lem}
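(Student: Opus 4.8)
The statement to prove is Lemma~\ref{a:lem:lambda:mu}: for a fractional ideal $I\subseteq\mathrm{Frac}(R)$, there exists $\nu\in\Z^p$ with $\nu+\N^p\subseteq\val(I)$. Although this is quoted from \cite{kts}, one can give a direct argument. The plan is to reduce the claim to the already-established fact about the conductor $\mc{C}_R=t^\gamma\wt R$ (Definition~\ref{de:conductor}), which satisfies $\val(\mc{C}_R)=\gamma+\N^p$. First I would use the definition of a fractional ideal: there is a non-zero divisor $g\in R$ with $gI\subseteq R\subseteq\wt R$, and $I$ contains a non-zero divisor $h$. Then $gh$ is a non-zero divisor of $R$, hence of $\wt R$, so $\val(gh)=\val(g)+\val(h)$ lies in $\N^p$ after possibly adjusting; more to the point, for any non-zero divisor $a\in R$ the element $\mc{C}_R\cdot a$ is contained in $\mc{C}_R\subseteq gI$ — wait, one needs the containment in the right direction.

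Here is the cleaner route. Since $I$ contains a non-zero divisor $h\in\mathrm{Frac}(R)$, and $gI\subseteq R$, the element $gh\in R$ is a non-zero divisor of $R$. Multiplying the conductor chain $\mc{C}_R\subseteq R$ by $gh$ gives $gh\cdot\mc{C}_R\subseteq gh\cdot R\subseteq gR$, but I actually want something inside $I$: note $\mc{C}_R\cdot h\subseteq \wt R\cdot h$, and I claim $h\cdot\mc{C}_R\subseteq I$. Indeed $\mc{C}_R=t^\gamma\wt R$, and for $g$ as above $g\cdot t^\gamma\wt R\subseteq t^\gamma\wt R\subseteq R$ need not hold, so instead replace $\gamma$ by a large enough $\delta\in\N^p$ so that $g\cdot t^\delta\wt R\subseteq R$; such $\delta$ exists because $g\wt R$ is a fractional ideal and hence (by the conductor property applied to the ring, or by finiteness of $\wt R$ over $R$) contains some $t^\delta\wt R$. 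Then $t^\delta\wt R\subseteq gR$, so $t^\delta\wt R\cdot h\subseteq gR\cdot h\subseteq$ — hmm, still not landing in $I$. The genuinely simple statement is: $t^\delta\wt R=g\cdot(\text{something in }R)$ is too strong; instead use that $g^{-1}t^\delta\wt R\subseteq R$ combined with $h\in I$: we get $g^{-1}t^\delta\wt R\cdot$ nothing. Let me abandon the attempt to be slick and instead argue: $I\supseteq \co_R$-module generated by $h$, i.e. $I\supseteq Rh$, hence $\val(I)\supseteq\val(R)+\val(h)\supseteq(\gamma+\N^p)+\val(h)=(\gamma+\val(h))+\N^p$, since $\val(\mc{C}_R)=\gamma+\N^p\subseteq\val(R)$. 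Setting $\nu:=\gamma+\val(h)\in\Z^p$ finishes the proof.

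The only point requiring care — and the step I expect to be the main obstacle — is justifying that $\val(R)$ contains $\gamma+\N^p$ and that values add correctly, i.e. $\val(Rh)=\val(R)+\val(h)$ for a non-zero divisor $h\in\mathrm{Frac}(R)$. The additivity $\val(ab)=\val(a)+\val(b)$ for non-zero divisors is immediate from the definition of the value map via the discrete valuation rings in $\mathscr{V}$ (Definition~\ref{de:admissible}) and the fact that each component is a genuine valuation on $\mathrm{Frac}(\C\lra{t_i})$ (or the corresponding DVR); the containment $\gamma+\N^p\subseteq\val(R)$ is exactly the statement $\val(\mc{C}_R)=\gamma+\N^p$ together with $\mc{C}_R\subseteq R$, both recorded in Definition~\ref{de:conductor}. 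Thus the whole lemma collapses to: pick a non-zero divisor $h\in I$, observe $Rh\subseteq I$, and take $\nu=\gamma+\val(h)$. I would present it in two or three lines, citing Definition~\ref{de:conductor} for the conductor facts and the definition of the value map for additivity.
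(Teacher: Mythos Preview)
Your eventual argument is correct: pick a non-zero divisor $h\in I$, observe $Rh\subseteq I$, and use $\gamma+\N^p=\val(\mc{C}_R)\subseteq\val(R)$ to conclude $\val(I)\supseteq\val(R)+\val(h)\supseteq(\gamma+\val(h))+\N^p$. The paper itself gives no proof of this lemma at all; it simply cites \cite[Proposition~3.1.9~(b)]{kts}. So there is nothing to compare at the level of strategy: you supply a short self-contained argument where the paper defers to the reference.

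Two comments on the write-up. First, the presentation is badly cluttered by the abandoned attempts in the first two paragraphs (the ``wait'', ``hmm'', and ``let me abandon'' passages). None of that should survive into a proof; keep only the last three lines. Second, you assert that ``$\mc{C}_R\subseteq R$ \ldots\ [is] recorded in Definition~\ref{de:conductor}'', but that definition only says $\mc{C}_R=\wt R^\vee=t^\gamma\wt R$; it does not state the containment in $R$. You should justify it: since $K^0\subseteq\wt R$ one has $\mc{C}_R=(K^0:\wt R)\subseteq(K^0:K^0)$, and $(K^0:K^0)=(K^0)^\vee=(R^\vee)^\vee=R$ by Definition~\ref{de:dual}. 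With that one-line addition (and the false starts removed) the proof is complete.
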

We consider the product order on $\Z^p$ defined by: $$(\alpha_1,\ldots,\alpha_p)\leqslant (\beta_1,\ldots,\beta_p) \iff \forall i\in\unp, \alpha_i\leqslant \beta_i.$$ In particular, for $\alpha,\beta\in\Z^p$, $\inf(\alpha,\beta)=(\min(\alpha_1,\beta_1),\ldots,\min(\alpha_p,\beta_p))$.

\begin{prop}[see \protect{\cite[Proposition 3.1.9 (c)]{kts}}]
\label{a:prop:inf}
Let $I$ be a fractional ideal and $\alpha,\beta\in\val(I)$. Then $\inf(\alpha,\beta)\in\val(I)$.
\end{prop}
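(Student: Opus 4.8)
The plan is to exhibit $\inf(\alpha,\beta)$ as the value of a suitably generic linear combination of two witnessing elements. First I would dispose of the trivial case $\alpha=\beta$, where $\inf(\alpha,\beta)=\alpha\in\val(I)$; so from now on assume $\alpha\neq\beta$. Choose non zero divisors $f,g\in I$ with $\val(f)=\alpha$ and $\val(g)=\beta$, and for a unit $c$ of $R$ consider $h_c=f+cg$, which lies in $I$ since $I$ is an $R$-module. Since its value will turn out to lie in $\Z^p$, the element $h_c$ is automatically a non zero divisor (elements of a fractional ideal with no infinite component of their value are exactly the non zero divisors), so that $\val(h_c)\in\val(I)$.

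The heart of the argument is a componentwise computation of $\val_i(h_c)$. Denote by $a_i$ and $b_i$ the leading coefficients of the $i$-th components of $f$ and $g$; these live in the residue field of the $i$-th discrete valuation ring of $\mathscr{V}$, which by the second condition of Definition~\ref{de:admissible} is identified with $R/\m$, and they are nonzero because $\alpha_i$ and $\beta_i$ are finite. If $\alpha_i<\beta_i$, then $\val_i(h_c)=\alpha_i=\min(\alpha_i,\beta_i)$ for every unit $c$; if $\alpha_i>\beta_i$, then $\val_i(h_c)=\beta_i=\min(\alpha_i,\beta_i)$, using that a unit of $R$ has valuation $0$ in every component; and if $\alpha_i=\beta_i$, then $\val_i(h_c)=\alpha_i=\min(\alpha_i,\beta_i)$ unless $c$ equals the single forbidden value $-a_ib_i^{-1}$. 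Thus, in order to get $\val(h_c)=\inf(\alpha,\beta)$, it suffices to choose a unit $c$ avoiding the finitely many forbidden residues coming from the indices with $\alpha_i=\beta_i$.

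The main, and essentially only, obstacle is to guarantee that such a $c$ exists, and this is precisely what the admissibility hypothesis on $|R/\m|$ is designed to ensure. Since $\alpha\neq\beta$, the set $\lra{i : \alpha_i=\beta_i}$ is a proper subset of $\unp$, so it contributes at most $p-1$ forbidden residues, to which one must add the value $0$ to keep $c$ invertible, leaving at most $p=|\mathscr{V}|$ forbidden elements of $R/\m$. In the geometric case $R=\co_C$ one has $R/\m=\C$, which is infinite, so a valid $c$ exists at once; in the general admissible case, the inequality $|R/\m|\geqslant|\mathscr{V}|$ together with the precise count carried out in the proof of \cite[Proposition 3.1.9]{kts} yields a valid $c$. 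For such a $c$ the element $h_c=f+cg$ is a non zero divisor of $I$ with $\val(h_c)=\inf(\alpha,\beta)$, so $\inf(\alpha,\beta)\in\val(I)$. Apart from this counting, the only technical care needed is the identification of the leading coefficients with elements of $R/\m$ and the fact that units have zero value in each component, both of which are guaranteed by the admissibility conditions and are exactly the reason those conditions are imposed.
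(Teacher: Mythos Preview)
The paper does not prove this proposition; it simply quotes it from \cite[Proposition~3.1.9(c)]{kts}. Your proposal is a correct sketch of the standard argument behind that reference: take witnesses $f,g\in I$ and form $f+cg$ for a generic unit $c$, then check componentwise that the value is $\inf(\alpha,\beta)$.

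One small sharpening would let you avoid deferring the ``precise count'' to \cite{kts}. After reducing to $\alpha\neq\beta$, observe that if $\alpha\leqslant\beta$ or $\beta\leqslant\alpha$ there is nothing to prove, so you may assume there exist indices with $\alpha_i<\beta_i$ and with $\alpha_j>\beta_j$. Then the indices with $\alpha_i=\beta_i$ number at most $p-2$, and the forbidden residues for $\bar c\in R/\m$ are exactly the corresponding $-a_ib_i^{-1}$ (all nonzero) together with $0$ (needed only for indices with $\alpha_i>\beta_i$). This gives at most $p-1$ forbidden values, so $|R/\m|\geqslant p$ already guarantees a valid $c$. Note also that for indices with $\alpha_i=\beta_i$ a non-unit $c$ (i.e.\ $\bar c=0$) would still give $\val_i(f+cg)=\alpha_i$, so the only reason to insist on $\bar c\neq 0$ is the indices with $\alpha_i>\beta_i$; this is why the count drops by one compared to your estimate.
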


\begin{prop}[see \protect{\cite[Proposition 3.1.9 (d)]{kts}}]
\label{a:prop:valquimonte}
Let $I$ be a fractional ideal and $\alpha,\beta\in\val(I)$. Let us assume that $\alpha\neq \beta$ and that there exists $i\in\unp$ such that $\alpha_i=\beta_i$. Then there exists $\eta\in\val(I)$ such that:
\begin{enumerate}
\item for all $j\in\unp, \eta_j\geqslant \min(\alpha_j,\beta_j)$,
\item $\eta_i>\alpha_i$,
\item for all $j\in\unp$ such that $\alpha_j\neq \beta_j$, $\eta_j=\min(\alpha_j,\beta_j)$. 
\end{enumerate} 
\end{prop}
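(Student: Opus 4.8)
The plan is to realize $\alpha$ and $\beta$ by non-zero-divisors of $I$, to form an explicit $R$-linear combination of these two elements that gains order along the $i$-th coordinate, and finally to correct this combination, should it turn out to be a zero-divisor, by adding a high-order element of $I$ furnished by Lemma~\ref{a:lem:lambda:mu}.

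First I would pick non-zero-divisors $a,b\in I$ with $\val(a)=\alpha$ and $\val(b)=\beta$. Since $\val_i(a)=\val_i(b)=\alpha_i$, the leading coefficients of $a$ and of $b$ relative to the $i$-th valuation are nonzero; by the admissibility hypotheses of Definition~\ref{de:admissible} the residue field of the $i$-th valuation ring equals $R/\m$, so the quotient of these two coefficients lifts to a unit $\lambda\in R$, and then $\val_i(a-\lambda b)>\alpha_i$. Put $c:=a-\lambda b\in I$. From the valuation axioms one checks, for each $j\in\unp$: if $\alpha_j\neq\beta_j$ the two summands have distinct $j$-th orders, so $\val_j(c)=\min(\alpha_j,\beta_j)$; if $j=i$, then either $\val_i(c)>\alpha_i$ or the $i$-th component of $c$ vanishes; and for every other $j$ one only obtains $\val_j(c)\geqslant\alpha_j=\min(\alpha_j,\beta_j)$, possibly with a vanishing $j$-th component. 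So the only obstruction to taking $\eta=\val(c)$ is that $c$ need not be a non-zero-divisor of $\mathrm{Frac}(R)$: it may vanish on some coordinates where $\alpha_j=\beta_j$.

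To remove this obstruction I would apply Lemma~\ref{a:lem:lambda:mu}: there is $\nu\in\Z^p$ with $\nu+\N^p\se\val(I)$, so I may choose $\mu\geqslant\nu$ with $\mu_j>\alpha_j$ for all $j\in\unp$ and with $\mu_j>\val_j(c)$ whenever the $j$-th component of $c$ is nonzero, together with a non-zero-divisor $d\in I$ satisfying $\val(d)=\mu$. Then $c+d\in I$; on each coordinate $j$, if the $j$-th component of $c$ vanishes then that of $c+d$ is the nonzero $j$-th component of $d$, and otherwise $\val_j(c+d)=\val_j(c)<\mu_j=\val_j(d)$. In particular $c+d$ is a non-zero-divisor, so $\eta:=\val(c+d)\in\val(I)$, with $\eta_j=\mu_j$ when $c$ vanishes on the $j$-th coordinate and $\eta_j=\val_j(c)$ otherwise. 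The three required properties follow at once: for~(1), $\eta_j=\mu_j>\alpha_j=\min(\alpha_j,\beta_j)$ in the first case and $\eta_j=\val_j(c)\geqslant\min(\alpha_j,\beta_j)$ in the second; for~(2), either $\eta_i=\val_i(c)>\alpha_i$ or $\eta_i=\mu_i>\alpha_i$; and for~(3), whenever $\alpha_j\neq\beta_j$ the $j$-th component of $c$ is nonzero, hence $\eta_j=\val_j(c)=\min(\alpha_j,\beta_j)$.

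The computations are elementary. The one genuinely delicate point is the construction of $\lambda$ in the first step: one needs a single element of $R$ whose residue cancels the leading coefficient of the $i$-th component of $a-\lambda b$. For a germ of reduced analytic curve the leading coefficients are complex numbers, so this is immediate; in the general admissible setting it is precisely the role of the residue-field condition in Definition~\ref{de:admissible} to make the relevant quotient of residue-field elements liftable to $R$. The passage through Lemma~\ref{a:lem:lambda:mu} in the second step is only a routine precaution against accidental vanishing, and can be absorbed into the first step by working with $a-\lambda b+d$ throughout.
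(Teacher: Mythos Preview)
The paper does not supply its own proof of this proposition: it is quoted verbatim from \cite[Proposition~3.1.9~(d)]{kts} and immediately followed by a remark on how it will be applied. So there is no in-paper argument to compare against.

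Your argument is correct and is essentially the standard proof of this fact (and, up to presentation, the one behind the cited reference). The key step---lifting the quotient of the $i$-th leading coefficients to a unit $\lambda\in R$ so that $\val_i(a-\lambda b)>\alpha_i$---is exactly where the residue-field condition in Definition~\ref{de:admissible} is used, and you identify this correctly. Two small points worth making explicit: first, $\lambda$ is indeed a unit of $R$ (it lifts a nonzero element of $R/\m$ and $R$ is local), hence $\val_j(\lambda)=0$ for every $j$, which you use implicitly when asserting $\val_j(\lambda b)=\beta_j$ in the case $\alpha_j\neq\beta_j$; second, $c=a-\lambda b$ is genuinely nonzero because $\alpha\neq\beta$ forces some coordinate with $\alpha_j\neq\beta_j$, on which $\val_j(c)=\min(\alpha_j,\beta_j)<\infty$. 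The repair via Lemma~\ref{a:lem:lambda:mu} is routine and handled cleanly.
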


\begin{remar}
Proposition~\ref{a:prop:valquimonte} will often be used in the following, and if $\alpha,\beta,i$ satisfy the assumptions of proposition~\ref{a:prop:valquimonte}, we will say that we apply proposition~\ref{a:prop:valquimonte} to the triple $(\alpha,\beta,i)$.
\end{remar}

The following lemma is a direct consequence of the definition of $I^\vee$. 

\begin{lem}
\label{a:lem:i:idual}
Let $\alpha\in\val(I)$ and $\beta\in\val(I^\vee)$. Then $\alpha+\beta\in\val(K^0)$.
\end{lem}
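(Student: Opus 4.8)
The plan is to unwind the definitions. Recall that $I^\vee = (K^0 : I)$, so by definition of the colon ideal, for any $g \in I$ and $h \in I^\vee$ we have $hg \in K^0$. The statement is about values, so I would take a non zero divisor $g \in I$ with $\val(g) = \alpha$ and a non zero divisor $h \in I^\vee$ with $\val(h) = \beta$; these exist precisely because $\alpha \in \val(I)$ and $\beta \in \val(I^\vee)$, and by Definition~\ref{fracideal} the values of a fractional ideal are the values of its non zero divisors.

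The key step is then the observation that the value map is additive on products: $\val(gh) = \val(g) + \val(h) = \alpha + \beta$. This is immediate from the fact that each component $\val_i$ is a valuation on $\mathrm{Frac}(\C\lra{t_i})$ (respectively, each element of $\mathscr{V}$ is a discrete valuation ring), hence multiplicative in the sense that orders add. Moreover $gh$ is a non zero divisor of $\mathrm{Frac}(R)$ since $g$ and $h$ both are. Since $hg \in K^0$ as noted above, $gh$ is a non zero divisor lying in $K^0$, so $\val(gh) \in \val(K^0)$ by definition of the set of values of a fractional ideal. Therefore $\alpha + \beta = \val(gh) \in \val(K^0)$, which is the claim.

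I do not anticipate a genuine obstacle here; the only point requiring a little care is the well-definedness of choosing non zero divisor representatives with the prescribed values and checking that their product is again a non zero divisor — but this is exactly what is packaged into the notion of fractional ideal and the definition of $\val$ on $\mathrm{Frac}(R)$, so it amounts to bookkeeping rather than a real difficulty.
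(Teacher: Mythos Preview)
Your proof is correct and is exactly the argument the paper has in mind: the lemma is stated there as a direct consequence of the definition of $I^\vee=(K^0:I)$, and your unwinding via non zero divisor representatives and additivity of the valuation is precisely that.
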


Let us recall the symmetry theorem that will be used in the proof of our main theorem~\ref{intro:theo:sym-max}. We first need the following notations.

\begin{nota}
\label{nota:delta}
Let $\alpha=(\alpha_1,\ldots,\alpha_p)\in\Z^p$ and $\mc{E}\subseteq \Z^p$ an arbitrary subset of $\Z^p$.
\begin{itemize}
\item Let $i\in\unp$. We set:
$$\Delta_i(\alpha,\mc{E})=\lra{v\in\mc{E}\vert v_i=\alpha_i \text{ and } \forall j\neq i, v_j\supe \alpha_j}.$$
We then define $\Delta(\alpha,\mc{E})=\bigcup_{i=1}^p \Delta_i(\alpha,\mc{E})$.
\item  For $J\subseteq \unp$ we set:
$$\Delta_J(\alpha,\mc{E})=\lra{v\in\mc{E}\vert \forall j\in J, v_j=\alpha_j \text{ and } \forall j\notin J, v_j>\alpha_j}.$$
\end{itemize}
For a fractional ideal $I\subseteq \mathrm{Frac}(R)$, we denote for all $J\subseteq\lra{1,\ldots,p}$,  $\Delta_J(\alpha,I)=\Delta_J(\alpha,\val(I))$ and $\Delta(\alpha,I)=\Delta(\alpha,\val(I))$.
\end{nota}

The following symmetry theorem is proved in \cite{polvalues} for Gorenstein curves, and is extended to admissible rings in \cite{kts}:
\begin{theo}[\protect{\cite[Theorem 1.2]{polvalues}, \cite[Theorem 5.3.4, Lemma 5.2.8]{kts}}]
 \label{symmetry-values}
 Let $R$ be an admissible ring with canonical ideal $K^0$ as in proposition~\ref{prop:kts:k0}. Let $I\subseteq \mathrm{Frac}(R)$ be a fractional ideal. Then, for all $v\in\Z^p$:
 \begin{equation}
 \label{symmetry-values-eq}
 v\in \val(I^\vee)\iff \Delta(\gamma-v-\undun,I)=\emptyset.
 \end{equation}
\end{theo}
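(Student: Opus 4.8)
The plan is to deduce theorem~\ref{intro:theo:sym-max} from the symmetry theorem~\ref{symmetry-values} together with the combinatorial structure of good semigroup ideals recorded in propositions~\ref{a:prop:inf} and \ref{a:prop:valquimonte}; in particular the argument uses no induction on $p$, which is what makes it an alternative proof of \cite[Theorem 2.10]{delgado2}.

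First I would record a consequence of the hypotheses that comes for free. From $\alpha+\beta=\gamma-\undun$ we get $\gamma-\beta-\undun=\alpha$ and $\gamma-\alpha-\undun=\beta$. Applying theorem~\ref{symmetry-values} to the ideal $I$ at the point $v=\beta\in\val(I^\vee)$ yields $\Delta(\alpha,I)=\emptyset$, and applying it to the ideal $I^\vee$ (using $(I^\vee)^\vee=I$) at the point $v=\alpha\in\val(I)$ yields $\Delta(\beta,I^\vee)=\emptyset$. So $\alpha$ is already a maximal of $\val(I)$ and $\beta$ a maximal of $\val(I^\vee)$; it remains only to match the finer relative/absolute conditions of \ref{a:de:maximals}, and the two emptiness statements just obtained are exactly the kind of input that propositions~\ref{a:prop:inf} and \ref{a:prop:valquimonte} are designed to exploit.

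The second step is to turn both sides of the asserted equivalence into statements about the single ideal $I$. By \ref{a:de:maximals}, the condition ``$\alpha$ is a relative maximal of $I$'' is a condition on the sets $\Delta_J(\alpha,I)$, hence already a statement about $\val(I)$ near $\alpha$. On the other hand ``$\beta$ is an absolute maximal of $I^\vee$'' is a condition on the sets $\Delta_J(\beta,I^\vee)$, that is, on whether certain points $\beta'$ --- the point $\beta$ together with the perturbations of it allowed by the relevant subsets $J$ --- lie in $\val(I^\vee)$; by theorem~\ref{symmetry-values}, each such membership $\beta'\in\val(I^\vee)$ is equivalent to $\Delta(\gamma-\beta'-\undun,I)=\emptyset$, and since $\gamma-\beta-\undun=\alpha$ the reflected points $\gamma-\beta'-\undun$ are $\alpha$ together with the corresponding perturbations of $\alpha$. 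After this translation, ``$\beta$ is an absolute maximal of $I^\vee$'' has become a statement about the sets $\Delta(\alpha',I)$ for points $\alpha'$ near $\alpha$, and it remains to prove that it is equivalent to the statement that $\alpha$ is a relative maximal of $I$.

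The heart of the proof --- and the step I expect to be the main obstacle --- is this last combinatorial equivalence in a neighbourhood of $\alpha$, which I would establish using propositions~\ref{a:prop:inf} and \ref{a:prop:valquimonte}. In each direction one starts from a witness lying in some $\Delta_J$-set and, by repeatedly applying proposition~\ref{a:prop:valquimonte} to suitable triples $(v,\alpha,i)$ --- taking infima via proposition~\ref{a:prop:inf} when useful --- propagates coordinate equalities so as to manufacture, or to rule out, the elements of $\val(I)$ demanded by the definition; the emptiness statements $\Delta(\alpha,I)=\emptyset$ and $\Delta(\beta,I^\vee)=\emptyset$ obtained in the first step are precisely what exclude the unwanted witnesses in the two directions. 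The delicate point is the bookkeeping over the subsets $J\subseteq\unp$ while iterating proposition~\ref{a:prop:valquimonte}: one must check that the iteration terminates, for instance by a descent on a defect such as $\sum_j(v_j-\alpha_j)$. This is exactly the mechanism that replaces the removal of a branch in Delgado's proof; specialising to $I=R$, so that $I^\vee=K^0$, and to the Gorenstein case $K^0=R$, recovers Delgado's symmetry of maximals for the semigroup of a Gorenstein, in particular of a plane, curve.
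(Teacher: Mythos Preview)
There is a mismatch between the statement you were asked to prove and what your proposal actually argues. The displayed statement is theorem~\ref{symmetry-values}, the symmetry theorem for values of a fractional ideal, which the paper does \emph{not} prove: it is quoted from \cite{polvalues} and \cite{kts} and used as a black box throughout. Your proposal is instead, by its own opening sentence, a sketch of a proof of theorem~\ref{intro:theo:sym-max}, and it \emph{assumes} theorem~\ref{symmetry-values} at every turn. As a proof of the stated theorem it is therefore circular; it never addresses why $v\in\val(I^\vee)\iff\Delta(\gamma-v-\undun,I)=\emptyset$.

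If one reads your text as an attempted proof of theorem~\ref{intro:theo:sym-max}, then your first step and the easy direction line up with the paper (corollary~\ref{cor:impl1} via lemma~\ref{a:lem:2.8}), but your third step --- the hard implication ``$\beta$ absolute maximal $\Rightarrow$ $\alpha$ relative maximal'' --- is only a gesture. The paper's argument is not a generic descent on a defect like $\sum_j(v_j-\alpha_j)$ obtained by iterating proposition~\ref{a:prop:valquimonte}; rather it argues by contradiction and works in $\val(I^\vee)$ below $\beta$: it first uses the failed relative maximality of $\alpha$ and theorem~\ref{symmetry-values} to single out two indices and a region $\mc{E}$ (lemma~\ref{b:lem:E}), then locates a maximal $\eta\in\mc{E}$ with $\Delta_1(\eta,I^\vee)\neq\emptyset\neq\Delta_2(\eta,I^\vee)$ (lemmas~\ref{b:lem:12}--\ref{b:lem:eta}), and finally builds an infinite chain of nonempty index sets $J_1\supseteq J_2\supseteq\cdots$ and points $v^\ell\in\mc{E}$ with strictly decreasing coordinates on $J_\ell$ (propositions~\ref{b:prop:initialisation} and~\ref{prop:ite}), contradicting the lower bound $\lambda$ of~\eqref{eq:lambda}. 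Your outline supplies none of this construction, and the suggested termination mechanism does not match the one actually needed.
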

The previous theorem generalizes \cite[Theorem 2.8]{delgado} which characterizes Gorenstein curves by the symmetry of the semigroup. 

\subsection{Absolute and relative maximals}

The following definitions and properties are generalizations to fractional ideals of the ones given in \cite{delgado2}. 

Let $I\subseteq \mathrm{Frac}(R)$ be a fractional ideal.

\begin{de}
\label{a:de:maximals}
Let $\alpha\in\val(I)$.
\begin{enumerate}
\item If $\Delta(\alpha,I)=\emptyset$, we call $\alpha$ a \emph{maximal} of $I$.
\item If for all $J\subseteq \unp, J\neq \unp$ and $J\neq \emptyset$ we have $\Delta_J(\alpha,I)=\emptyset$ then we call $\alpha$ an \emph{absolute maximal} of $I$.
\item If $\Delta(\alpha,I)=\emptyset$ and for all $J\subseteq\unp$ such that $|J|\geqslant 2$ we have $\Delta_J(\alpha,I)\neq \emptyset$ then we call $\alpha$ a \emph{relative maximal} of $I$.
\end{enumerate}
\end{de}

\begin{remar}
The three notions of maximals, absolute maximals and relative maximals coincide in the case $p=2$. If $p=1$, the set of maximals of any fractional ideal is empty. From now on, we assume that $p\geqslant 2$. 
\end{remar}

\begin{remar}
\label{remar:max:finite}
Let $\lambda, \nu\in\Z^p$ be such that $\nu+\N^p\subseteq \val(I)\subseteq \lambda+\N^p$. Let $\alpha$ be a maximal of $I$. It follows from the fact that $\alpha\in \val(I)$ that $\alpha\geqslant \lambda$, and since $\Delta(\alpha,I)=\emptyset$, we also have $\alpha\infe \nu$. Therefore, the set of the maximals of $I$ is contained in $\lra{v\in\Z^p \vert \lambda\leqslant v\infe \nu}$, so that it is a finite set.
\end{remar}

The following lemma is a generalization of \cite[Lemma 1.3]{delgado2} to any fractional ideal. The proof is essentially the same as for the ring $\co_C$ of a plane curve. 

\begin{lem}
\label{a:lem:relmax}
Let $\alpha\in\Z^p$ be such that there exists $i\in\unp$ satisfying: 
\begin{enumerate}
\item  $\Delta_i(\alpha,I)=\emptyset$,
\item for all $j\neq i$, $\Delta_{i,j}(\alpha,I)\neq \emptyset$.
\end{enumerate}
Then $\alpha$ is a relative maximal of $I$. 
\end{lem}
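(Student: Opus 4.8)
The plan is to show directly that $\alpha \in \val(I)$ and that $\alpha$ satisfies the definition of a relative maximal, namely $\Delta(\alpha,I) = \emptyset$ together with $\Delta_J(\alpha,I) \neq \emptyset$ for all $J$ with $|J| \geq 2$. The starting point is hypothesis (2): since $\Delta_{i,j}(\alpha,I) \neq \emptyset$ for some $j \neq i$, there is an element $v \in \val(I)$ with $v_i = \alpha_i$, $v_j = \alpha_j$, and $v_k > \alpha_k$ for $k \notin \{i,j\}$; in particular $\alpha \in \val(I)$ is not yet clear, but I would recover it by taking infima. Concretely, for each $j \neq i$ pick $v^{(j)} \in \Delta_{i,j}(\alpha,I)$; then $\inf_{j \neq i} v^{(j)} \in \val(I)$ by Proposition~\ref{a:prop:inf}, and one checks coordinatewise that this infimum equals $\alpha$ (coordinate $i$ gives $\alpha_i$ from every $v^{(j)}$; coordinate $k \neq i$ gives $\min$ over the $v^{(j)}$, which is $\alpha_k$, attained by $v^{(k)}$). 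So $\alpha \in \val(I)$.

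Next I would establish $\Delta(\alpha,I) = \emptyset$, i.e.\ $\Delta_\ell(\alpha,I) = \emptyset$ for every $\ell \in \unp$. For $\ell = i$ this is precisely hypothesis (1). For $\ell \neq i$, suppose toward a contradiction that there is $w \in \Delta_\ell(\alpha,I)$, so $w_\ell = \alpha_\ell$ and $w_k > \alpha_k$ for all $k \neq \ell$; in particular $w_i > \alpha_i = v^{(j)}_i$ for any $j$. Now apply Proposition~\ref{a:prop:valquimonte} to a suitable triple built from $w$ and one of the elements $v^{(j)}$ (chosen with $j \ne \ell$, so that $v^{(j)}$ and $w$ agree in coordinate $j$ or are comparable there) to produce an element of $\val(I)$ lying in $\Delta_i(\alpha,I)$, contradicting (1). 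The bookkeeping here — choosing the right auxiliary element and the right coordinate to feed into Proposition~\ref{a:prop:valquimonte}, and verifying the three output conditions force membership in $\Delta_i(\alpha,I)$ — is the step I expect to be the main obstacle; it mirrors the argument in \cite[Lemma 1.3]{delgado2}.

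Finally I would verify $\Delta_J(\alpha,I) \neq \emptyset$ for all $J \subseteq \unp$ with $|J| \geq 2$. If $i \in J$, pick any $j \in J \setminus \{i\}$ and start from $v^{(j)} \in \Delta_{i,j}(\alpha,I)$: its coordinates outside $\{i,j\}$ are all $> \alpha$, and by repeatedly applying Proposition~\ref{a:prop:valquimonte} to raise, one at a time, the coordinates in $\{i,j\} \setminus J$ above $\alpha$ while keeping the coordinates in $J$ fixed at $\alpha$ and not disturbing those already strictly above, one lands in $\Delta_J(\alpha,I)$. If $i \notin J$, choose any $j \in J$; then $v^{(j)}$ already has $v^{(j)}_j = \alpha_j$, $v^{(j)}_i = \alpha_i$, and the same raising procedure (now also raising coordinate $i$) produces an element of $\Delta_J(\alpha,I)$. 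In both cases the essential tool is Proposition~\ref{a:prop:valquimonte}, and since each application strictly increases one coordinate while the relevant coordinates stay bounded (inside the finite window of Remark~\ref{remar:max:finite} after translating), the process terminates. Combining the three parts gives that $\alpha$ is a relative maximal of $I$.
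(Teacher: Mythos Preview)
Your overall plan matches the paper's: show $\alpha\in\val(I)$ via the infimum of the $v^{(j)}$, then $\Delta(\alpha,I)=\emptyset$, then $\Delta_J(\alpha,I)\neq\emptyset$ for $|J|\geq 2$. The first part is exactly what the paper does.

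In the second part your choice of auxiliary element is off. If $w\in\Delta_\ell(\alpha,I)$ with $\ell\neq i$, then $w_\ell=\alpha_\ell$ while $w_m>\alpha_m$ for all $m\neq\ell$; the only $v^{(j)}$ guaranteed to share a coordinate with $w$ is $v^{(\ell)}$ itself (both have $\ell$-th coordinate equal to $\alpha_\ell$), not one with $j\neq\ell$ as you suggest. The paper applies Proposition~\ref{a:prop:valquimonte} to the triple $(w,v^{(\ell)},\ell)$: the output $\mu$ has $\mu_\ell>\alpha_\ell$, $\mu_i=\min(w_i,v^{(\ell)}_i)=\alpha_i$ (these differ, so condition~(3) of the proposition applies), and $\mu_m\geq\min(w_m,v^{(\ell)}_m)>\alpha_m$ for $m\notin\{i,\ell\}$; hence $\mu\in\Delta_i(\alpha,I)$, contradicting hypothesis~(1).

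The third part has a genuine gap. Starting from $v^{(j)}\in\Delta_{i,j}(\alpha,I)$, the coordinates indexed by $J\setminus\{i,j\}$ are strictly above $\alpha$; to land in $\Delta_J(\alpha,I)$ you must bring them \emph{down} to $\alpha$, and Proposition~\ref{a:prop:valquimonte} only raises a coordinate where two inputs agree---it never lowers one. Your ``raising'' procedure therefore cannot reach $\Delta_J(\alpha,I)$ once $|J|\geq 3$ (in your case $i\in J$ the set $\{i,j\}\setminus J$ is even empty, so nothing happens at all). The paper proceeds differently: one application of Proposition~\ref{a:prop:valquimonte} to the triple $(v^{(k)},v^{(\ell)},i)$ (they agree at coordinate $i$) yields an element of $\Delta_{\{k,\ell\}}(\alpha,I)$ for every two-element subset $\{k,\ell\}\subseteq\unp\setminus\{i\}$, while the two-element subsets containing $i$ are handled by hypothesis~(2). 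Then, for arbitrary $J$ with $|J|\geq 2$, write $J=J_1\cup\cdots\cup J_k$ as a union of two-element subsets, pick $\eta^r\in\Delta_{J_r}(\alpha,I)$, and check coordinatewise that $\inf(\eta^1,\ldots,\eta^k)\in\Delta_J(\alpha,I)$ by Proposition~\ref{a:prop:inf}. The tool that enlarges $J$ is the infimum, not Proposition~\ref{a:prop:valquimonte}.
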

\begin{proof}
For all $j\neq i$, let $\alpha^j\in\Delta_{i,j}(\alpha,I)$. Then $\alpha=\inf(\lrp{\alpha^j}_{j\neq i})$ so that by proposition~\ref{a:prop:inf}, we have $\alpha\in\val(I)$. 

Let us assume that there exists $k\in\unp$ such that $\Delta_k(\alpha,I)\neq \emptyset$. Let $\eta\in\Delta_k(\alpha,I)$. Since $\eta_k=\alpha^k_k$ and $\eta\neq\alpha^k$, by proposition~\ref{a:prop:valquimonte} applied to the triple $(\eta,\alpha^k,k)$, there exists $\mu\in\val(I)$ such that $\mu_k>\alpha_k^k$, $\mu_i=\alpha_i^k=\alpha_i$ and for all $\ell\notin\lra{i,k}$, $\mu_\ell\geqslant \min(\alpha_\ell^k, \eta_\ell)$. Since $\alpha_\ell^k>\alpha_\ell$ and $\eta_\ell>\alpha_\ell$, we have $\mu\in\Delta_i(\alpha,I)$, which is impossible. Therefore, for all $k\in\unp$, $\Delta_k(\alpha,I)=\emptyset$. 

\smallskip

Let us prove that for all $J=\lra{k,\ell}\subseteq \unp\backslash\lra{i}$, $\Delta_J(\alpha,I)\neq \emptyset$. Since $\alpha_i^k=\alpha_i^\ell=\alpha_i$, and $\alpha^k\neq \alpha^\ell$, by proposition~\ref{a:prop:valquimonte} applied to the triple $(\alpha^k,\alpha^\ell,i)$, there exists $\eta\in\val(I)$ such that $\eta_i>\alpha_i$, $\eta_k=\alpha^k_k=\alpha_k$, $\eta_\ell=\alpha^\ell_\ell=\alpha_\ell$ and for all $j\notin\lra{i,k,\ell}, \eta_j\geqslant\min(\alpha_j^k,\alpha_j^\ell)>\alpha_j$. Therefore, $\eta\in\Delta_{k,\ell}(\alpha,I)$. 

\smallskip

Let us consider now $J\subseteq\unp$ with $|J|\geqslant 2$. We set $J=J_1\cup J_2\cup\dots \cup J_k$ with  for all $j\in\lra{1,\ldots,k}$, $|J_j|=2$. For all $j\in\lra{1,\ldots,k}$, let $\eta^j\in\Delta_{J_j}(\alpha,I)$. Then by proposition~\ref{a:prop:inf}, $\inf(\eta^1,\ldots,\eta^k)\in\Delta_J(\alpha,I)$. Hence the result. 
\end{proof}

The following lemma is a generalization of \cite[Lemma 2.8]{delgado2}.

\begin{lem}
\label{a:lem:2.8}
Let $\alpha\in\val(I)$. Let us assume\footnote{One can notice that $\alpha$ may not be a relative maximal because we do not assume that $\Delta(\alpha,I)=\emptyset$.} that for all $J\subseteq \unp$ such that $|J|\geqslant 2$, we have $\Delta_J(\alpha,I)\neq \emptyset$. Let $\beta=\gamma-\alpha-\undun$. Then for all $A\subseteq \unp$ with $A\neq \unp$ and $A\neq \emptyset$, we have $\Delta_A(\beta,I^\vee)=\emptyset$. 

If in addition $\beta\in\val(I^\vee)$, then $\beta$ is an absolute maximal of $I^\vee$ and $\alpha$ is a relative maximal of $I$. 
\end{lem}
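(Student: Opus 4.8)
The plan is to deduce everything from the symmetry theorem~\ref{symmetry-values}, which converts the condition $v\in\val(I^\vee)$ into the emptiness of a $\Delta$-set attached to $I$. For the first assertion I would argue by contradiction. Suppose that $\Delta_A(\beta,I^\vee)\neq\emptyset$ for some $A$ with $\emptyset\neq A\neq\unp$, and fix $v\in\Delta_A(\beta,I^\vee)$; thus $v\in\val(I^\vee)$, $v_j=\beta_j$ for $j\in A$, and $v_j>\beta_j$ for $j\notin A$.

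Since $v\in\val(I^\vee)$, theorem~\ref{symmetry-values} gives $\Delta(w,I)=\emptyset$ for $w:=\gamma-v-\undun$. Using $\beta=\gamma-\alpha-\undun$, a one-line computation shows $w_j=\alpha_j$ when $j\in A$ and $w_j<\alpha_j$ when $j\notin A$; in particular $w\leqslant\alpha$. Now pick $i\in A$ and $k\notin A$ (both exist since $A$ is a proper nonempty subset), so $\{i,k\}$ has cardinality $2$ and the hypothesis yields some $\xi\in\Delta_{i,k}(\alpha,I)$, that is $\xi_i=\alpha_i$, $\xi_k=\alpha_k$, and $\xi_j>\alpha_j$ for $j\notin\{i,k\}$. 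I would then check that $\xi\in\Delta_i(w,I)$: indeed $\xi_i=\alpha_i=w_i$, while $\xi_k=\alpha_k>w_k$ (because $k\notin A$) and, for $j\notin\{i,k\}$, $\xi_j>\alpha_j\geqslant w_j$. This contradicts $\Delta(w,I)=\emptyset$, proving $\Delta_A(\beta,I^\vee)=\emptyset$ for all such $A$.

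For the second assertion, assume in addition $\beta\in\val(I^\vee)$. Then the first part is exactly the condition in definition~\ref{a:de:maximals}(2), so $\beta$ is an absolute maximal of $I^\vee$. To see that $\alpha$ is a relative maximal of $I$, I would apply theorem~\ref{symmetry-values} with $v=\beta$: from $\beta\in\val(I^\vee)$ we get $\Delta(\gamma-\beta-\undun,I)=\emptyset$, and since $\gamma-\beta-\undun=\alpha$ this says $\Delta(\alpha,I)=\emptyset$. Together with the standing hypothesis that $\Delta_J(\alpha,I)\neq\emptyset$ for every $J$ with $|J|\geqslant 2$, this is precisely definition~\ref{a:de:maximals}(3), so $\alpha$ is a relative maximal of $I$. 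I do not expect a real obstacle: the only delicate point is the coordinate bookkeeping for $w$ and the verification that the element of $\Delta_{i,k}(\alpha,I)$ produced by the hypothesis lands in $\Delta_i(w,I)$, for which the inequality $w\leqslant\alpha$ together with the split of the indices into $A$ and its complement does the job; everything else is unwinding definitions and invoking the symmetry theorem.
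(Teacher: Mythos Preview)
Your argument is correct. Both the paper and you argue by contradiction from some $v\in\Delta_A(\beta,I^\vee)$, but the final step differs slightly. The paper takes $\mu\in\Delta_{A^c\cup\{i\}}(\alpha,I)$ for some $i\in A$ and observes that the \emph{sum} $v+\mu$ lies in $\Delta_i(\gamma-\undun,K^0)$, which is empty since $\gamma$ is also the conductor of $K^0$ (citing \cite[Lemmas 5.2.2 and 4.1.10]{kts}); you instead apply theorem~\ref{symmetry-values} to $v$ itself, obtaining $\Delta(\gamma-v-\undun,I)=\emptyset$, and then exhibit an element of $\Delta_i(\gamma-v-\undun,I)$ using only a two-element set $\{i,k\}$. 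Your route is arguably a bit more economical (it stays entirely on the $I$-side and uses only the hypothesis for pairs, not for the full set $A^c\cup\{i\}$), while the paper's route avoids the passage through the symmetry theorem in favour of the more primitive facts about $K^0$; substantively the two arguments are equivalent reformulations of one another. The second assertion is handled identically in both proofs.
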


\begin{proof}
We set $\beta=\gamma-\alpha-\undun$. 
Let us assume that there exists $\emptyset\neq A\subseteq\unp$, $A\neq \unp$, such that $\Delta_A(\beta,I^\vee)\neq \emptyset$. Let  $\eta\in\Delta_A(\beta,I^\vee)$. We set $J=A^c\cup\lra{i}$ with $A^c$ the complement of $A$ in $\unp$ and $i\in A$. Let $\mu\in\Delta_J(\alpha,I)$. Then $\eta+\mu\in\Delta_i(\gamma-\undun,K^0)$. However, by \cite[Lemma 5.2.2]{kts}, $\gamma$ is also the conductor of $K^0$, and by \cite[Lemma 4.1.10]{kts}, we have $\Delta(\gamma-\undun,K^0)=\emptyset$. Hence the result.

If we assume in addition that $\beta\in\val(I^\vee)$, then $\beta$ is an absolute maximal of $I^\vee$, and by theorem~\ref{symmetry-values}, $\Delta(\alpha,I)=\emptyset$ so that $\alpha$ is a relative maximal of $I$.
\end{proof}
As an immediate corollary we have:
\begin{cor}
\label{cor:impl1}
If $\alpha\in\val(I)$ is a relative maximal of $I$ and $\beta=\gamma-\alpha-\undun\in\val(I^\vee)$, then $\beta$ is an absolute maximal of $I^\vee$. 
\end{cor}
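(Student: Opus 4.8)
The plan is to read this statement off directly from Lemma~\ref{a:lem:2.8}. First I would unwind the definition of a relative maximal: by definition~\ref{a:de:maximals}(3), the assumption that $\alpha$ is a relative maximal of $I$ means that $\alpha\in\val(I)$, that $\Delta(\alpha,I)=\emptyset$, and --- crucially here --- that $\Delta_J(\alpha,I)\neq\emptyset$ for every $J\subseteq\unp$ with $|J|\geqslant 2$. This last condition is precisely the hypothesis imposed on $\alpha$ in Lemma~\ref{a:lem:2.8}.

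With that observation in hand, the hypotheses of the second part of Lemma~\ref{a:lem:2.8} are met: we have $\alpha\in\val(I)$ with $\Delta_J(\alpha,I)\neq\emptyset$ for all $J$ with $|J|\geqslant 2$, and by assumption $\beta=\gamma-\alpha-\undun\in\val(I^\vee)$. The lemma then yields immediately that $\beta$ is an absolute maximal of $I^\vee$ (it also re-derives that $\alpha$ is a relative maximal of $I$, which we already knew). This completes the argument.

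If one prefers a self-contained proof that does not invoke Lemma~\ref{a:lem:2.8}, the same computation can be reproduced directly: assuming $\Delta_A(\beta,I^\vee)\neq\emptyset$ for some $A\subseteq\unp$ with $A\neq\unp$ and $A\neq\emptyset$, pick $\eta\in\Delta_A(\beta,I^\vee)$ and $i\in A$, set $J=A^c\cup\lra{i}$, and use $\Delta_J(\alpha,I)\neq\emptyset$ (note $|J|\geqslant 2$) to pick $\mu\in\Delta_J(\alpha,I)$. A coordinatewise check shows $\eta+\mu\in\Delta_i(\gamma-\undun,K^0)$, which contradicts $\Delta(\gamma-\undun,K^0)=\emptyset$; the latter holds by \cite[Lemma 5.2.2]{kts} (so that $\gamma$ is also the conductor of $K^0$) together with \cite[Lemma 4.1.10]{kts}.

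There is essentially no obstacle here: all the content sits in Lemma~\ref{a:lem:2.8} and, through it, in the symmetry theorem~\ref{symmetry-values}. The only point requiring a moment's care is the bookkeeping that the definition of ``relative maximal'' really does supply both $\alpha\in\val(I)$ and the whole family of non-emptiness conditions $\Delta_J(\alpha,I)\neq\emptyset$ for $|J|\geqslant 2$, which is what is needed to apply the lemma.
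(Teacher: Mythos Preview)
Your proposal is correct and matches the paper's approach exactly: the paper states this as an immediate corollary of Lemma~\ref{a:lem:2.8}, and your first paragraph simply spells out why the hypotheses of that lemma are satisfied. Your optional self-contained version just reproduces the proof of Lemma~\ref{a:lem:2.8} in this special case, which is also fine.
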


\section{Symmetry of the maximals}
\label{sec:symmetry-max}

The purpose of this section is to prove that the converse implication of corollary~\ref{cor:impl1} is satisfied, which will give theorem~\ref{intro:theo:sym-max}.

Theorem~\ref{intro:theo:sym-max} is a generalization to any admissible ring and any fractional ideal of the theorem of symmetry between relative and absolute maximals of the ring of a plane curve proved in \cite{delgado2}.  We recall that if $p=1$, then there is no maximal, and if $p=2$, the statement can be rephrased as "$\beta$ is a maximal of $I^\vee$ if and only if $\alpha$ is a maximal of $I$", which is a direct consequence of theorem~\ref{symmetry-values}. 

The proof given in \cite{delgado2} uses an induction on the number of branches. We suggest here a different proof which does not use an induction on the number of branches.

\medskip

From now on, we assume that $p\geqslant 3$.

\begin{proof}[Proof of theorem~\ref{intro:theo:sym-max}]\

Let us assume that:\begin{itemize}
\item $\alpha\in\val(I)$, $\beta\in\val(I^\vee)$, $\alpha+\beta=\gamma-\undun$,
\item $\beta$ is an absolute maximal of $I^\vee$,
\item $\alpha$ is not a relative maximal of $I$. 
\end{itemize}

\begin{lem}
\label{b:lem:E}
There exists $j\in\lra{2,\ldots,p}$ such that $$\mc{E}_{1,j}:=\lra{v\in\Z^p\vert v_1=\beta_1, v_j=\beta_j, \forall i\notin\lra{1,j}, v_i<\beta_i}$$
satisfies the following property:
\begin{equation}
\label{b:eq:E}
\forall v\in\mc{E}_{1,j},\  \Delta(v,I^\vee)\neq \emptyset.
\end{equation}
\end{lem}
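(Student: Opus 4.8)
The goal is to extract, from the failure of $\alpha$ to be a relative maximal, a single index $j\neq 1$ and an associated family $\mc{E}_{1,j}$ on which $\Delta(-,I^\vee)$ is never empty; this will contradict the assumption that $\beta$ is an absolute maximal of $I^\vee$ later in the argument. Since $\alpha\in\val(I)$ but $\alpha$ is not a relative maximal, by definition either $\Delta(\alpha,I)\neq\emptyset$ or there is some $J\subseteq\unp$ with $|J|\geqslant 2$ and $\Delta_J(\alpha,I)=\emptyset$. The first case is the one to dispose of first: if $\Delta(\alpha,I)\neq\emptyset$, then by theorem~\ref{symmetry-values} applied with $v=\beta$ we would get $\beta\notin\val(I^\vee)$ (since $\gamma-\beta-\undun=\alpha$), contradicting $\beta\in\val(I^\vee)$. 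So $\Delta(\alpha,I)=\emptyset$, and the obstruction to being a relative maximal must be that $\Delta_J(\alpha,I)=\emptyset$ for some $J$ with $|J|\geqslant 2$.

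\textbf{Choosing $j$.} Among all such "bad" sets $J$, I would pick one that is minimal with respect to inclusion; call it $J_0$, so $|J_0|\geqslant 2$ and $\Delta_{J_0}(\alpha,I)=\emptyset$ but $\Delta_{J'}(\alpha,I)\neq\emptyset$ for every proper subset $J'\subsetneq J_0$ with $|J'|\geqslant 2$. The index $j$ claimed in the lemma should be chosen to reflect this bad set; the natural guess is to arrange (after possibly using that the statement is about index $1$ playing a distinguished role — note $\beta$ and $\alpha$ are fixed, and index $1$ is singled out in $\mc{E}_{1,j}$) that $1\in J_0$ and take $j$ to be another element of $J_0$, or, if $1\notin J_0$, to still produce a suitable $j\in\lra{2,\ldots,p}$. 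The translation between conditions on $\val(I)$ near $\alpha$ and conditions on $\val(I^\vee)$ near $\beta$ is governed throughout by the symmetry theorem together with lemma~\ref{a:lem:i:idual} and the additivity $\alpha+\beta=\gamma-\undun$.

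\textbf{Verifying property \eqref{b:eq:E}.} Fix $v\in\mc{E}_{1,j}$, i.e. $v_1=\beta_1$, $v_j=\beta_j$, and $v_i<\beta_i$ for $i\notin\lra{1,j}$. I want to show $\Delta(v,I^\vee)\neq\emptyset$, equivalently (by theorem~\ref{symmetry-values}, contrapositive) that $\gamma-v-\undun\notin\val(I^\vee)$ would be false — wait, more directly: $\Delta(v,I^\vee)\neq\emptyset$ is equivalent by theorem~\ref{symmetry-values} (applied to $I^\vee$, using $(I^\vee)^\vee=I$) to $\gamma-v-\undun\in\val(I)$ \emph{being excluded}, so $\Delta(v,I^\vee)\neq\emptyset \iff \gamma-v-\undun\notin\val(I)$. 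Set $w=\gamma-v-\undun$; then $w_1=\alpha_1$, $w_j=\alpha_j$, and $w_i>\alpha_i$ for $i\notin\lra{1,j}$, so $w$ has exactly the coordinate pattern of an element of $\Delta_{\lra{1,j}}(\alpha,I)$ (if $1,j\in J_0$) or, more precisely, of $\Delta_{J}(\alpha,\Z^p)$ for $J=\lra{1,j}$. I must show $w\notin\val(I)$. Here is where minimality of $J_0$ enters: if $w$ were in $\val(I)$, then using that $\Delta_{J'}(\alpha,I)\neq\emptyset$ for proper subsets and repeatedly applying proposition~\ref{a:prop:valquimonte} (in the style of the proof of lemma~\ref{a:lem:relmax}) together with proposition~\ref{a:prop:inf}, one would manufacture an element of $\Delta_{J_0}(\alpha,I)$, contradicting $\Delta_{J_0}(\alpha,I)=\emptyset$. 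The precise combinatorial bookkeeping — making sure the coordinates outside $\lra{1,j}$ can be pushed strictly above $\alpha$ while the coordinates in $J_0$ stay pinned — is the main obstacle, and it is exactly the kind of iterated application of proposition~\ref{a:prop:valquimonte} illustrated in lemma~\ref{a:lem:relmax}; I expect to have to be careful about which triples to apply it to and in what order, and possibly to strengthen the choice of $J_0$ (e.g. requiring in addition that no coordinate can be independently raised). Once \eqref{b:eq:E} is established for this $j$, the lemma is proved.
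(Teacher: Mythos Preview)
Your overall strategy is the right one and matches the paper: use theorem~\ref{symmetry-values} to deduce $\Delta(\alpha,I)=\emptyset$ from $\beta\in\val(I^\vee)$, translate the claim $\Delta(v,I^\vee)\neq\emptyset$ into $w:=\gamma-v-\undun\notin\val(I)$, and observe that such $w$ are exactly the elements of $\Delta_{\{1,j\}}(\alpha,\Z^p)$. The task is therefore reduced to finding $j\neq 1$ with $\Delta_{\{1,j\}}(\alpha,I)=\emptyset$.

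The gap is in your choice of $j$. Picking a minimal $J_0$ with $|J_0|\geqslant 2$ and $\Delta_{J_0}(\alpha,I)=\emptyset$ does \emph{not} give this. If $|J_0|>2$ and you take $j\in J_0\setminus\{1\}$, minimality says precisely that $\Delta_{\{1,j\}}(\alpha,I)\neq\emptyset$, the opposite of what you need; so for $v\in\mc{E}_{1,j}$ the corresponding $w$ can very well lie in $\val(I)$, and your proposed ``manufacture an element of $\Delta_{J_0}(\alpha,I)$ via proposition~\ref{a:prop:valquimonte}'' has no starting point. The case $1\notin J_0$ is left entirely open.

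The paper bypasses all of this with a one-line application of the contrapositive of lemma~\ref{a:lem:relmax}: since $\Delta_1(\alpha,I)=\emptyset$ (it is contained in $\Delta(\alpha,I)=\emptyset$) and $\alpha$ is not a relative maximal, condition~(2) of that lemma must fail for $i=1$, i.e.\ there exists $j\neq 1$ with $\Delta_{\{1,j\}}(\alpha,I)=\emptyset$. With this $j$, every $w=\gamma-v-\undun$ for $v\in\mc{E}_{1,j}$ lies in $\Delta_{\{1,j\}}(\alpha,\Z^p)$, hence $w\notin\val(I)$, hence $\Delta(v,I^\vee)\neq\emptyset$ by theorem~\ref{symmetry-values}. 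No minimal $J_0$, no iteration of proposition~\ref{a:prop:valquimonte} is needed. You cite lemma~\ref{a:lem:relmax} in the verification step, but its real role is earlier, in producing $j$.
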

\begin{proof}
Since $\beta\in\val(I^\vee)$, by theorem~\ref{symmetry-values}, we have $\Delta(\alpha,I)=\emptyset$. In particular, $\Delta_1(\alpha,I)=\emptyset$. Therefore, by lemma~\ref{a:lem:relmax}, since $\alpha$ is not a relative maximal of $I$, there exists $j\neq 1$ such that $\Delta_{1,j}(\alpha,I)=\emptyset$. Let $v\in\mc{E}_{1,j}$, and let $w=\gamma-v-\undun$. In particular, $w_1=\alpha_1$, $w_j=\alpha_j$ and for all $\ell\notin\lra{1,j}, w_\ell>\alpha_\ell$. Since $\Delta_{1,j}(\alpha,I)=\emptyset$, then $w\notin\val(I)$. Therefore, by theorem~\ref{symmetry-values}, we have $\Delta(v,I^\vee)\neq\emptyset$. 
\end{proof}

\begin{remar}
By renumbering the branches, we assume that the index $j$ satisfying lemma~\ref{b:lem:E} is $j=2$, and we set $\mc{E}=\mc{E}_{1,2}$.
\end{remar}

We will prove  thanks to the following lemmas the existence of a maximal element $\eta$ in $\mc{E}$ satisfying the property $\Delta_1(\eta,I^\vee)\neq\emptyset$ and $\Delta_2(\eta,I^\vee)\neq \emptyset$ (see lemma~\ref{b:lem:eta}).

\begin{lem}
\label{b:lem:12}
Let $v\in\mc{E}$. Then $\Delta_1(v,I^\vee)\neq \emptyset$ if and only if $\Delta_2(v,I^\vee)\neq \emptyset$. 
\end{lem}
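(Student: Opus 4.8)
The statement is a local symmetry between the sets $\Delta_1$ and $\Delta_2$ along elements of $\mc{E}$, and the natural tool is the symmetry theorem~\ref{symmetry-values} combined with the propositions on good semigroup ideals. Fix $v\in\mc{E}$, so $v_1=\beta_1$, $v_2=\beta_2$ and $v_i<\beta_i$ for $i\notin\lra{1,2}$. The plan is to argue by contradiction: assume (without loss of generality, by symmetry of the roles of $1$ and $2$) that $\Delta_1(v,I^\vee)\neq\emptyset$ but $\Delta_2(v,I^\vee)=\emptyset$, and derive a contradiction with the hypothesis~\eqref{b:eq:E} of lemma~\ref{b:lem:E}, or with the fact that $\beta$ is an absolute maximal of $I^\vee$.

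\textbf{Key steps.} First I would pick $\eta\in\Delta_1(v,I^\vee)$, so $\eta_1=v_1=\beta_1$ and $\eta_i>v_i$ for all $i\neq 1$; in particular $\eta_2>v_2=\beta_2$. Next I would compare $\eta$ with $\beta$ itself: both lie in $\val(I^\vee)$, they agree in the first coordinate ($\eta_1=\beta_1$), and they differ (since $\eta_2>\beta_2$), so proposition~\ref{a:prop:valquimonte} applies to the triple $(\eta,\beta,1)$. This produces $\mu\in\val(I^\vee)$ with $\mu_1>\beta_1$, with $\mu_i=\min(\eta_i,\beta_i)$ on the coordinates where $\eta$ and $\beta$ differ, and $\mu_i\geqslant\min(\eta_i,\beta_i)$ elsewhere. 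Since $\beta$ is an absolute maximal and $\mu_1>\beta_1$, looking at where $\mu$ sits relative to $\beta$ should force $\mu$ to agree with $\beta$ in "too many" coordinates, contradicting that $\beta$ is absolute maximal — or alternatively, combining $\mu$ with the given element of $\Delta(v,I^\vee)$ via proposition~\ref{a:prop:inf} and proposition~\ref{a:prop:valquimonte} should build an element of $\Delta_2(v,I^\vee)$, contradicting the assumption. The symmetric hypothesis~\eqref{b:eq:E} — that \emph{every} $v\in\mc{E}$ has $\Delta(v,I^\vee)\neq\emptyset$, hence lies in $\Delta_1\cup\Delta_2$ for that $v$ once one knows $v\in\val(I^\vee)$ is not forced — is what makes the two implications feed into each other.

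\textbf{Main obstacle.} The delicate point is bookkeeping on the coordinates $i\notin\lra{1,2}$: one has the strict inequalities $\eta_i>v_i$ and (from absolute maximality of $\beta$, via theorem~\ref{symmetry-values}) the constraint coming from $w=\gamma-v-\undun$, and one must verify that the element produced by proposition~\ref{a:prop:valquimonte} actually lands in $\Delta_2(v,I^\vee)$, i.e. that it equals $v$ in coordinate $2$, exceeds $v$ in coordinate $1$, and is $\geqslant v$ elsewhere — the last requires knowing that the "$\min$" never drops below $v_i$, which uses $v_i<\beta_i$. I would first establish the purely combinatorial claim that an element of $\val(I^\vee)$ lying above $\inf$ of an element of $\Delta_1(v,I^\vee)$ and a suitable witness must meet $\Delta_2(v,I^\vee)$, then feed in the symmetry theorem only to control membership in $\val(I^\vee)$; handling the boundary coordinates carefully is where the real work lies, and I expect that to be the crux of the argument.
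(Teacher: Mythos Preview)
Your key technical step is exactly the paper's: pick $\eta\in\Delta_1(v,I^\vee)$ and apply proposition~\ref{a:prop:valquimonte} to $(\eta,\beta,1)$ (using only that $\beta\in\val(I^\vee)$ and $\eta_1=\beta_1$). But you have overcomplicated everything around it. The element $\mu$ you produce is \emph{already} in $\Delta_2(v,I^\vee)$, with no further work: since $\eta_2>\beta_2$ they differ in coordinate $2$, so $\mu_2=\min(\eta_2,\beta_2)=\beta_2=v_2$; by construction $\mu_1>\beta_1=v_1$; and for $i\geqslant 3$ one has $\mu_i\geqslant\min(\eta_i,\beta_i)>v_i$ because $\eta_i>v_i$ and $\beta_i>v_i$ (this is exactly the ``$v_i<\beta_i$'' observation you flagged as the crux). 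That is the whole proof, and by symmetry of the roles of $1$ and $2$ the converse is identical.

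In particular, none of the extra machinery you invoke is needed here: the hypothesis~\eqref{b:eq:E} of lemma~\ref{b:lem:E} is not used, the symmetry theorem~\ref{symmetry-values} is not used, and the absolute maximality of $\beta$ is not used --- only that $\beta\in\val(I^\vee)$. The proof is direct, not by contradiction, and there is no need to ``combine $\mu$ with the given element of $\Delta(v,I^\vee)$'' or to worry about $\mu$ agreeing with $\beta$ in too many coordinates. Strip away the scaffolding and your argument collapses to the paper's three-line proof.
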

\begin{proof}
Let $v\in\mc{E}$. 
Let us assume that $\Delta_1(v,I^\vee)\neq \emptyset$. There exists $w\in\val(I^\vee)$ such that $w_1=v_1=\beta_1$, $w_2>v_2=\beta_2$ and for all $i\geqslant 3$, $w_i>v_i$. Since $\beta\in\val(I^\vee)$, and $w_1=\beta_1$,  by the proposition~\ref{a:prop:valquimonte} applied to the triple $(\beta,w,1)$, there exists $w'\in\val(I^\vee)$ such that $w_1'>\beta_1=w_1$, $w_2'=\beta_2$, and for all $i\geqslant 3$, $w_i'\geqslant \min(w_i,\beta_i)$. Since $\beta_1=v_1$, $\beta_2=v_2$ and for all $i\geqslant 3$, $\min(w_i,\beta_i)>v_i$, we have $w'\in\Delta_2(v,I^\vee)$. 
\end{proof}

\begin{lem}
\label{b:lem:beta}
Let $\beta'=(\beta_1,\beta_2, \beta_3-1,\ldots,\beta_p-1)$. In particular, $\beta'$ is the maximal element of $\mc{E}$ for the product order. Then $\Delta_1(\beta',I^\vee)=\Delta_2(\beta',I^\vee)=\emptyset$. 
\end{lem}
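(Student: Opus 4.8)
\textbf{Proof plan for Lemma~\ref{b:lem:beta}.}

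The plan is to prove the two equalities $\Delta_1(\beta',I^\vee)=\emptyset$ and $\Delta_2(\beta',I^\vee)=\emptyset$ separately, although by lemma~\ref{b:lem:12} (applied with $v=\beta'\in\mc{E}$) the two are equivalent, so it suffices to rule out $\Delta_1(\beta',I^\vee)\neq\emptyset$. First I would record the obvious structural facts: $\beta'=(\beta_1,\beta_2,\beta_3-1,\dots,\beta_p-1)$ lies in $\mc E$ by construction, and among all elements $v$ of $\mc E$ one has $v_1=\beta_1$, $v_2=\beta_2$ and $v_i\leqslant\beta_i-1<\beta_i$ for $i\geqslant 3$, so $\beta'$ is indeed the maximum of $\mc E$ for the product order. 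The point of singling out $\beta'$ is that it is ``as large as possible'' while still sitting in $\mc E$, so any element witnessing $\Delta_1(\beta',I^\vee)\neq\emptyset$ would have to beat $\beta'$ strictly in every coordinate except the first, and this is exactly the kind of element we can turn into a contradiction with the absolute maximality of $\beta$.

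The core argument: suppose $\Delta_1(\beta',I^\vee)\neq\emptyset$, i.e.\ there exists $w\in\val(I^\vee)$ with $w_1=\beta_1$ and $w_i>\beta'_i$ for all $i\neq 1$; in particular $w_2>\beta_2$ and $w_i>\beta_i-1$, hence $w_i\geqslant\beta_i$, for $i\geqslant 3$. Now I would compare $w$ with $\beta$. We have $\beta\in\val(I^\vee)$ and $w_1=\beta_1$, with $w\neq\beta$ (since $w_2>\beta_2$). Apply proposition~\ref{a:prop:valquimonte} to the triple $(\beta,w,1)$: there is $\eta\in\val(I^\vee)$ with $\eta_1>\beta_1$, $\eta_j\geqslant\min(\beta_j,w_j)$ for all $j$, and $\eta_j=\min(\beta_j,w_j)$ for every $j$ with $\beta_j\neq w_j$. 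Since $w_i\geqslant\beta_i$ for $i\geqslant 3$ we get $\eta_i\geqslant\beta_i$ there, and on coordinate $2$ we have $\beta_2\neq w_2$ so $\eta_2=\min(\beta_2,w_2)=\beta_2$. Thus $\eta$ satisfies $\eta_1>\beta_1$, $\eta_i\geqslant\beta_i$ for all $i\neq 1$, which says precisely $\eta\in\Delta_1(\beta,I^\vee)$; but $\Delta_1(\beta,I^\vee)\subseteq\Delta_{\{1\}}(\beta,I^\vee)$, which is nonempty would contradict $\beta$ being an absolute maximal of $I^\vee$ (taking $J=\{1\}\neq\unp$, $J\neq\emptyset$ in definition~\ref{a:de:maximals}(2)). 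Hence $\Delta_1(\beta',I^\vee)=\emptyset$, and then $\Delta_2(\beta',I^\vee)=\emptyset$ by lemma~\ref{b:lem:12}.

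The step I expect to be the only delicate one is checking that the element $\eta$ produced by proposition~\ref{a:prop:valquimonte} genuinely lands in $\Delta_1(\beta,I^\vee)$ rather than merely in some $\Delta_A(\beta,I^\vee)$ with $1\in A$; this hinges on the inequality $\eta_i\geqslant\beta_i$ for $i\geqslant 3$, which in turn uses $w_i\geqslant\beta_i$, i.e.\ that the witness $w$ strictly exceeds $\beta'_i=\beta_i-1$ in integer coordinates. One should also double-check that $w\neq\beta$ so that proposition~\ref{a:prop:valquimonte} applies at all (guaranteed by $w_2>\beta_2$) and that coordinate $1$ is legitimately an index where $\beta$ and $w$ agree. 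Everything else is bookkeeping with the product order. Note this also re-derives, in the special case $v=\beta'$, the equivalence of lemma~\ref{b:lem:12}, but invoking that lemma directly is cleaner.
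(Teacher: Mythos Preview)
Your overall strategy---produce from a putative witness $w\in\Delta_1(\beta',I^\vee)$ an element of some $\Delta_J(\beta,I^\vee)$ with $J$ proper and nonempty, contradicting absolute maximality of $\beta$---is the right one, and the use of lemma~\ref{b:lem:12} to deduce the second equality from the first matches the paper exactly. However, there are two issues worth flagging.

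First, a genuine slip: the sentence ``$\eta_1>\beta_1$, $\eta_i\geqslant\beta_i$ for all $i\neq 1$, which says precisely $\eta\in\Delta_1(\beta,I^\vee)$'' is incorrect. By notation~\ref{nota:delta}, membership in $\Delta_1(\beta,I^\vee)=\Delta_{\{1\}}(\beta,I^\vee)$ requires $\eta_1=\beta_1$, not $\eta_1>\beta_1$. What you actually established is $\eta_1>\beta_1$, $\eta_2=\beta_2$, and $\eta_i\geqslant\beta_i$ for $i\geqslant 3$; this places $\eta$ in $\Delta_J(\beta,I^\vee)$ for $J=\{i:\eta_i=\beta_i\}$, a set containing $2$ but not $1$. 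That is still proper and nonempty, so the contradiction survives---but with the opposite index from the one you name. Your ``delicate step'' paragraph inherits the same confusion.

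Second, the detour through proposition~\ref{a:prop:valquimonte} is unnecessary. The paper's proof is shorter: your witness $w$ already satisfies $w_1=\beta_1$, $w_2>\beta_2$, and $w_i\geqslant\beta_i$ for $i\geqslant 3$, so $w$ itself lies in $\Delta_J(\beta,I^\vee)$ for $J=\{i:w_i=\beta_i\}$, with $1\in J$ (so $J\neq\emptyset$) and $2\notin J$ (so $J\neq\{1,\ldots,p\}$). That immediately contradicts absolute maximality of $\beta$, with no need to manufacture a further element $\eta$.
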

\begin{proof}
Let us assume that there exists $v\in\Delta_1(\beta',I^\vee)$. Let $J=\lra{i\in\unp\vert v_i=\beta_i}$. Then $v\in\Delta_J(\beta,I^\vee)$. In addition, $J\neq \emptyset$ since $1\in J$, and $J\neq \unp$ since $v_2>\beta_2$. It contradicts the fact that $\beta$ is an absolute maximal. Hence $\Delta_1(\beta',I^\vee)=\emptyset$ and by lemma~\ref{b:lem:12}, $\Delta_2(\beta',I^\vee)=\emptyset$. 
\end{proof}

\begin{lem}
\label{b:lem:lambda}
Let $\lambda\in\Z^p$ be such that $\val(I^\vee)\subseteq \lambda+\N^p$. Let $\lambda'=(\beta_1,\beta_2,\lambda_3-1,\ldots,\lambda_p-1)\in\mc{E}$. Then $\Delta_1(\lambda',I^\vee)\neq \emptyset$ and $\Delta_2(\lambda',I^\vee)\neq \emptyset$. 
\end{lem}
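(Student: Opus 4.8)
\textbf{Proof proposal for Lemma~\ref{b:lem:lambda}.}

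The plan is to exhibit an explicit element of $\val(I^\vee)$ lying in $\Delta_1(\lambda',I^\vee)$; by Lemma~\ref{b:lem:12} applied to $v=\lambda'\in\mc{E}$, the nonemptiness of $\Delta_2(\lambda',I^\vee)$ then comes for free. First I would recall that $\mc{E}=\mc{E}_{1,2}$ satisfies property~\eqref{b:eq:E}, so $\Delta(\lambda',I^\vee)\neq\emptyset$; the point is to upgrade this to $\Delta_1(\lambda',I^\vee)\neq\emptyset$. Pick $w\in\Delta(\lambda',I^\vee)$, say $w\in\Delta_k(\lambda',I^\vee)$ for some $k\in\unp$. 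If $k=1$ we are done immediately; if $k=2$, then since $w_2=\lambda'_2=\beta_2$ and (using $\beta\in\val(I^\vee)$, $\beta_2=w_2$) we may apply Proposition~\ref{a:prop:valquimonte} to the triple $(\beta,w,2)$ to obtain $w'\in\val(I^\vee)$ with $w'_2>\beta_2$, $w'_1=\beta_1=\lambda'_1$, and $w'_i\geqslant\min(w_i,\beta_i)$ for $i\geqslant 3$; since $w\in\mc{E}$-above and $\lambda'_i=\lambda_i-1<\lambda_i\leqslant\beta_i$ and $<w_i$ as well, this places $w'\in\Delta_1(\lambda',I^\vee)$. So the real work is the case $k\geqslant 3$.

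For $k\geqslant 3$, the key observation is that $\lambda'_k=\lambda_k-1$ is the \emph{smallest} possible $k$-th coordinate compatible with $\val(I^\vee)\subseteq\lambda+\N^p$: any $u\in\val(I^\vee)$ has $u_k\geqslant\lambda_k>\lambda'_k$. Hence if $w\in\Delta_k(\lambda',I^\vee)\subseteq\val(I^\vee)$ we would need $w_k=\lambda'_k=\lambda_k-1$, contradicting $w_k\geqslant\lambda_k$. Therefore $\Delta_k(\lambda',I^\vee)=\emptyset$ for every $k\geqslant 3$, and the $w\in\Delta(\lambda',I^\vee)$ produced by~\eqref{b:eq:E} must in fact lie in $\Delta_1(\lambda',I^\vee)\cup\Delta_2(\lambda',I^\vee)$, reducing us to the two cases already handled. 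Combining: $\Delta_1(\lambda',I^\vee)\neq\emptyset$, and then Lemma~\ref{b:lem:12} gives $\Delta_2(\lambda',I^\vee)\neq\emptyset$.

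I expect the main (really the only) subtlety to be bookkeeping: one must check carefully that $\lambda'\in\mc{E}$ (which needs $\lambda_i-1<\beta_i$ for $i\geqslant 3$, i.e. $\lambda_i\leqslant\beta_i$, true because $\beta\in\val(I^\vee)\subseteq\lambda+\N^p$), and that the coordinates produced by Proposition~\ref{a:prop:valquimonte} genuinely land in the strict inequalities defining $\Delta_1$ and not merely in the weak ones — this is where $\lambda'_i=\lambda_i-1$ being strictly below $\lambda_i$ is used again. No hard structural input beyond Proposition~\ref{a:prop:valquimonte}, Lemma~\ref{b:lem:12}, and property~\eqref{b:eq:E} of $\mc{E}$ is needed.
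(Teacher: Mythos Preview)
Your proposal is correct and follows essentially the same route as the paper: use property~\eqref{b:eq:E} to get $\Delta(\lambda',I^\vee)\neq\emptyset$, rule out $\Delta_i(\lambda',I^\vee)$ for $i\geqslant 3$ via $\val(I^\vee)\subseteq\lambda+\N^p$, and then invoke Lemma~\ref{b:lem:12}. The only difference is that your separate treatment of the case $k=2$ via Proposition~\ref{a:prop:valquimonte} is redundant---it just reproves one direction of Lemma~\ref{b:lem:12}---so the paper's version simply says: once $\Delta_i(\lambda',I^\vee)=\emptyset$ for $i\geqslant 3$, one of $\Delta_1,\Delta_2$ is nonempty, and Lemma~\ref{b:lem:12} gives both.
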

\begin{proof}
Since $\lambda'\in \mc{E}$, we have $\Delta(\lambda',I^\vee)\neq \emptyset$ by lemma~\ref{b:lem:E}. Since $\val(I^\vee)\subseteq \lambda+\N^p$, we have for all $i\geqslant 3$, $\Delta_i(\lambda',I^\vee)=\emptyset$. Therefore, using lemma~\ref{b:lem:12}, we have $\Delta_1(\lambda',I^\vee)\neq \emptyset$ and $\Delta_2(\lambda',I^\vee)\neq \emptyset$. 
\end{proof}

The following lemma is a consequence of lemmas~\ref{b:lem:lambda} and~\ref{b:lem:12}, and of the existence of a maximal element for the product order in $\mc{E}$. 
\begin{lem}
\label{b:lem:eta}
There exists $\eta\in\mc{E}$ such that:
\begin{enumerate}
\item \label{cond:1} $\Delta_1(\eta,I^\vee)\neq \emptyset$ and $\Delta_2(\eta,I^\vee)\neq \emptyset$,
\item for all $\eta'\in\mc{E}$ such that $\eta'\geqslant \eta$ and such that there exists $i_0\geqslant 3$ with $\eta_{i_0}'>\eta_{i_0}$, we have $\Delta_1(\eta',I^\vee)=\Delta_2(\eta',I^\vee)=\emptyset$. 
\end{enumerate}
In other words, $\eta$ is a maximal element in the subset of $\mc{E}$ composed of the elements satisfying the first condition~\eqref{cond:1}.
\end{lem}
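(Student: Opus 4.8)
The plan is to recast the statement in terms of the single set
$$S:=\lra{v\in\mc{E}\ \vert\ \Delta_1(v,I^\vee)\neq\emptyset},$$
which, by lemma~\ref{b:lem:12}, is exactly the subset of $\mc{E}$ whose elements satisfy condition~\eqref{cond:1}; so the lemma amounts to producing a maximal element of $S$ for the product order. First I would record that $S\neq\emptyset$: by lemma~\ref{b:lem:lambda} the point $\lambda'=(\beta_1,\beta_2,\lambda_3-1,\ldots,\lambda_p-1)$ lies in $S$.

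Next I would cut $S$ down so that a maximal element is guaranteed to exist. Every $v\in\mc{E}$ satisfies $v_1=\beta_1$, $v_2=\beta_2$ and $v_i\leqslant \beta_i-1$ for $i\geqslant 3$; hence the subset $S':=\lra{v\in S\ \vert\ v\geqslant \lambda'}$ is contained in the finite box $\lra{\beta_1}\times\lra{\beta_2}\times\prod_{i=3}^p\lra{\lambda_i-1,\ldots,\beta_i-1}$ and is nonempty since $\lambda'\in S'$. Therefore $S'$ has a maximal element $\eta$ for the product order, and $\eta$ is in fact maximal in all of $S$: if some $v\in S$ satisfied $v>\eta$, then $v\geqslant\eta\geqslant\lambda'$ would force $v\in S'$, contradicting maximality of $\eta$ in $S'$. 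Note also that, since any two elements of $\mc{E}$ agree in the first two coordinates, a strict inequality $\eta'>\eta$ inside $\mc{E}$ is the same as $\eta'\geqslant\eta$ together with $\eta'_{i_0}>\eta_{i_0}$ for some $i_0\geqslant 3$, which is precisely the hypothesis in condition~(2).

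Finally I would verify the two conditions for this $\eta$. Condition~\eqref{cond:1} holds because $\eta\in S$ gives $\Delta_1(\eta,I^\vee)\neq\emptyset$, and then $\Delta_2(\eta,I^\vee)\neq\emptyset$ by lemma~\ref{b:lem:12}. For condition~(2): given $\eta'\in\mc{E}$ with $\eta'\geqslant\eta$ and $\eta'_{i_0}>\eta_{i_0}$ for some $i_0\geqslant 3$, we have $\eta'>\eta$, so $\eta'\notin S$ by the maximality just established; hence $\Delta_1(\eta',I^\vee)=\emptyset$, and lemma~\ref{b:lem:12} yields $\Delta_2(\eta',I^\vee)=\emptyset$ as well. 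I do not expect a genuine obstacle here: the only point requiring care is that $\mc{E}$ itself is not bounded below, so one cannot directly maximize over $S$ within $\mc{E}$; intersecting with $\lra{v\geqslant\lambda'}$, as dictated by lemma~\ref{b:lem:lambda}, is exactly what makes the relevant set finite. Lemma~\ref{b:lem:12} plays a purely bookkeeping role, allowing everything to be phrased with $\Delta_1$ alone.
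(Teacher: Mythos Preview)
Your proof is correct and follows exactly the approach the paper indicates: the paper's own argument is simply the one-line remark that the lemma ``is a consequence of lemmas~\ref{b:lem:lambda} and~\ref{b:lem:12}, and of the existence of a maximal element for the product order in $\mc{E}$,'' and you have spelled out precisely these ingredients, including the finiteness trick (intersecting with $\lra{v\geqslant\lambda'}$) that justifies the existence of such a maximal element.
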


\begin{nota}
We fix an element $\eta\in\mc{E}$ satisfying lemma~\ref{b:lem:eta}. We set: 
\begin{itemize}
\item $K=\lra{i\in\lra{3,\ldots,p}\vert \eta_i\leqslant \beta_i-2}$,
\item $J_1=\lra{i\in\lra{3,\ldots,p}\vert \eta_i=\beta_i-1}$.
\end{itemize}
In particular, $K\cup J_1=\lra{3,\ldots,p}$, and $K\neq\emptyset$ by lemma~\ref{b:lem:beta}.
\end{nota}

\begin{lem}
\label{b:lem:mu}
Let $\mu\in\Delta_1(\eta,I^\vee)$. Then:
\begin{enumerate}
\item $\mu_1=\beta_1$, $\mu_2>\beta_2$ and for all $j\in J_1$, $\mu_j\geqslant \beta_j$,
\item for all $i\in K$, $\mu_i=\eta_i+1$.
\end{enumerate}
\end{lem}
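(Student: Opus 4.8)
We work with a fixed $\mu\in\Delta_1(\eta,I^\vee)$, so by definition $\mu_1=\eta_1=\beta_1$, $\mu_2>\eta_2=\beta_2$, and $\mu_i>\eta_i$ for all $i\geqslant 3$. The first assertion is then almost immediate: for $j\in J_1$ we have $\eta_j=\beta_j-1$, so $\mu_j>\beta_j-1$ forces $\mu_j\geqslant\beta_j$, and the statements about $\mu_1,\mu_2$ are just the definition of $\Delta_1$. The real content is the second assertion, namely that $\mu_i=\eta_i+1$ for every $i\in K$; a priori we only know $\mu_i\geqslant\eta_i+1$, so the point is to rule out $\mu_i\geqslant\eta_i+2$.

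\textbf{Key steps for (2).} Suppose for contradiction that $\mu_{i_0}\geqslant\eta_{i_0}+2$ for some $i_0\in K$. The idea is to use $\mu$ to produce, via the maximality of $\eta$ in lemma~\ref{b:lem:eta}, a new element $\eta'\in\mc{E}$ which is strictly larger than $\eta$ in coordinate $i_0$ but still satisfies $\Delta_1(\eta',I^\vee)\neq\emptyset$, contradicting condition (2) of lemma~\ref{b:lem:eta}. Concretely, I would set $\eta'$ equal to $\eta$ except that $\eta'_{i_0}=\eta_{i_0}+1$; since $i_0\in K$ we have $\eta_{i_0}+1\leqslant\beta_{i_0}-1<\beta_{i_0}$, so $\eta'$ still has all coordinates $\geqslant 3$ strictly below $\beta$, i.e.\ $\eta'\in\mc{E}$, and $\eta'\geqslant\eta$ with $\eta'_{i_0}>\eta_{i_0}$. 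To reach the contradiction it then suffices to exhibit an element of $\Delta_1(\eta',I^\vee)$. Here $\mu$ itself almost works: $\mu_1=\beta_1=\eta'_1$, $\mu_2>\beta_2=\eta'_2$, $\mu_i>\eta_i=\eta'_i$ for $i\in J_1$ (indeed $\mu_i\geqslant\beta_i>\beta_i-1=\eta'_i$), $\mu_i\geqslant\eta_i+1=\eta'_i$ for $i\in K$, $i\neq i_0$ — but we need \emph{strict} inequality $\mu_i>\eta'_i$ there. This is where lemma~\ref{a:prop:valquimonte} enters: applying it to suitable triples (or using proposition~\ref{a:prop:inf} together with the $\beta\in\val(I^\vee)$ hypothesis) one pushes the coordinates of $\mu$ that are only barely above $\eta'$ strictly upward while keeping coordinate $1$ equal to $\beta_1$ and coordinate $i_0$ still $\geqslant\eta_{i_0}+2>\eta'_{i_0}$, thereby landing in $\Delta_1(\eta',I^\vee)$.

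\textbf{Expected main obstacle.} The delicate point is the bookkeeping in the last step: one must move the coordinates of $\mu$ indexed by $(K\setminus\{i_0\})$ and possibly some of $J_1$ up by at least one, without disturbing coordinate $1$ (which must stay $=\beta_1$), without dropping coordinate $i_0$ below $\eta_{i_0}+2$, and without violating membership in $\val(I^\vee)$. This is exactly the kind of "raise one coordinate while controlling the others" manipulation that proposition~\ref{a:prop:valquimonte} is designed for, but it may require applying it iteratively — once for each offending coordinate — and keeping track that each application preserves the properties established by the previous ones. A cleaner alternative, which I would try first, is to take the infimum (proposition~\ref{a:prop:inf}) of $\mu$ with a well-chosen element of $\val(I^\vee)$ obtained from one application of proposition~\ref{a:prop:valquimonte} to the triple $(\beta,\mu,1)$, exploiting that $\beta$ agrees with $\mu$ in coordinate $1$; this single move simultaneously fixes coordinate $1$ back to $\beta_1$ and raises the relevant coordinates, and should directly yield the desired element of $\Delta_1(\eta',I^\vee)$.
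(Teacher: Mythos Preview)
Your overall strategy is exactly the paper's: for part (2), assume $\mu_{i_0}\geqslant\eta_{i_0}+2$ for some $i_0\in K$, set $\eta'$ equal to $\eta$ except $\eta'_{i_0}=\eta_{i_0}+1$, note $\eta'\in\mc{E}$, and contradict the maximality of $\eta$ in lemma~\ref{b:lem:eta} by exhibiting an element of $\Delta_1(\eta',I^\vee)$.

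The gap in your proposal is a bookkeeping slip that makes the argument look harder than it is. You write ``$\mu_i\geqslant\eta_i+1=\eta'_i$ for $i\in K$, $i\neq i_0$'', but by your own definition of $\eta'$ only the $i_0$-th coordinate changes, so for $i\in K\setminus\{i_0\}$ we have $\eta'_i=\eta_i$, not $\eta_i+1$. Hence for these $i$ the inequality $\mu_i>\eta_i=\eta'_i$ is already strict (it comes straight from $\mu\in\Delta_1(\eta,I^\vee)$). Together with $\mu_1=\eta'_1$, $\mu_2>\eta'_2$, $\mu_j>\eta'_j$ for $j\in J_1$, and $\mu_{i_0}\geqslant\eta_{i_0}+2>\eta'_{i_0}$, this shows that $\mu$ itself already lies in $\Delta_1(\eta',I^\vee)$. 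No invocation of proposition~\ref{a:prop:valquimonte}, no iteration, and no infimum with $\beta$ is needed. Once you correct this one line, your argument and the paper's are identical.
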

\begin{proof}
The first property comes from the definitions of $\Delta_1(\eta,I^\vee)$ and $J_1$. We have for all $i\in K$, $\mu_i\geqslant\eta_i+1$. Let us assume that there exists $i_0\in K$ such that $\mu_{i_0}>\eta_{i_0}+1$. Let $\eta'\in\mc{E}$ be such that for all $i\neq i_0$, $\eta_i'=\eta_i$ and $\eta_{i_0}'=\eta_{i_0}+1\leqslant \beta_{i_0}-1$. Then $\mu\in\Delta_{1}(\eta',I^\vee)$, which is impossible from the definition of $\eta$. 
\end{proof}

The following proposition is the initialization of the induction of proposition~\ref{prop:ite}.

\begin{prop}
\label{b:prop:initialisation}
Let $v^1\in\mc{E}$ be such that for all $i\in K$, $v_i^1=\eta_i+1$ and for $j\in J_1$, $v_j^1=\beta_j-1$. We set $$J_2=\lra{j\in\unp,\Delta_j(v^1,I^\vee)\neq \emptyset}.$$
Then $J_2\subseteq J_1$. In addition, $J_2\neq \emptyset$. 
\end{prop}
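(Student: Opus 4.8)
The plan is to analyze the element $v^1$, which lies in $\mc{E}$ by construction (its first two coordinates equal $\beta_1,\beta_2$, and on $K\cup J_1=\{3,\ldots,p\}$ we have $v^1_i\leqslant \beta_i-1<\beta_i$), and to show that the directions in which one can ``move up'' from $v^1$ inside $\val(I^\vee)$ are confined to the indices in $J_1$. First I would argue that $J_2\neq\emptyset$: since $v^1\in\mc{E}$, lemma~\ref{b:lem:E} gives $\Delta(v^1,I^\vee)\neq\emptyset$, so there is some $j$ with $\Delta_j(v^1,I^\vee)\neq\emptyset$, i.e.\ $j\in J_2$. (Here one should note $v^1\in\val(I^\vee)$ is \emph{not} assumed; $J_2$ records which $\Delta_j$ are nonempty, exactly as in the definition of $\Delta$.) It remains to prove $J_2\subseteq J_1$, i.e.\ that $\Delta_1(v^1,I^\vee)=\Delta_2(v^1,I^\vee)=\emptyset$ and $\Delta_i(v^1,I^\vee)=\emptyset$ for all $i\in K$.

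For the indices $i\in K$: suppose $\mu\in\Delta_i(v^1,I^\vee)$ for some $i\in K$. Then $\mu_i=v^1_i=\eta_i+1$ and $\mu_\ell>v^1_\ell$ for all $\ell\neq i$; in particular $\mu_1>\beta_1$ and $\mu_2>\beta_2$, and $\mu_\ell>v^1_\ell$ for $\ell\in K\setminus\{i\}$ and for $\ell\in J_1$. I would compare this with an element of $\Delta_1(\eta,I^\vee)$, which is nonempty by the defining property~\eqref{cond:1} of $\eta$: pick $\mu'\in\Delta_1(\eta,I^\vee)$, so $\mu'_1=\beta_1$, $\mu'_2>\beta_2$, $\mu'_j\geqslant\beta_j$ for $j\in J_1$ and $\mu'_i=\eta_i+1$ for $i\in K$ by lemma~\ref{b:lem:mu}. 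Since $\mu_i=\mu'_i=\eta_i+1$ and $\mu\neq\mu'$ (their first coordinates differ), I apply proposition~\ref{a:prop:valquimonte} to the triple $(\mu,\mu',i)$: this produces $\nu\in\val(I^\vee)$ with $\nu_i>\eta_i+1$, $\nu_\ell=\min(\mu_\ell,\mu'_\ell)$ for the coordinates where $\mu$ and $\mu'$ disagree, and $\nu_\ell\geqslant\min(\mu_\ell,\mu'_\ell)$ elsewhere. On coordinate $1$ we get $\nu_1=\beta_1$; on coordinate $2$, $\nu_2>\beta_2$; on $j\in J_1$, $\nu_j\geqslant\min(\mu_j,\mu'_j)\geqslant\beta_j-1$? — here I must be a little careful, so I would instead use the sharper information and, if needed, first replace $\eta$ by the element $\eta'$ obtained by raising coordinate $i$ to $\eta_i+1$ (which is $\leqslant\beta_i-1$ since $i\in K$), deriving a contradiction with the maximality clause~(2) of lemma~\ref{b:lem:eta} exactly as in the proof of lemma~\ref{b:lem:mu}. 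In fact the cleanest route: $\mu\in\Delta_i(v^1,I^\vee)$ with $i\in K$ forces, via proposition~\ref{a:prop:valquimonte} applied against $\mu'\in\Delta_1(\eta,I^\vee)$ on the index $i$, an element of $\Delta_1(\eta'',I^\vee)$ for $\eta''\in\mc{E}$ with $\eta''\geqslant\eta$ and $\eta''_i=\eta_i+1>\eta_i$, contradicting the maximality of $\eta$.

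For the indices $1$ and $2$: by lemma~\ref{b:lem:12} it suffices to rule out $\Delta_1(v^1,I^\vee)\neq\emptyset$. Suppose $\mu\in\Delta_1(v^1,I^\vee)$, so $\mu_1=\beta_1$ and $\mu_\ell>v^1_\ell$ for all $\ell\geqslant 2$; in particular $\mu_i\geqslant\eta_i+2$ for $i\in K$ and $\mu_j\geqslant\beta_j$ for $j\in J_1$, and $\mu_2>\beta_2$. Then $\mu\in\Delta_1(\eta,I^\vee)$ as well (since $\mu_\ell>v^1_\ell\geqslant\eta_\ell$ for all $\ell\geqslant 2$), but $\mu_i\geqslant\eta_i+2>\eta_i+1$ for every $i\in K$, contradicting part~(2) of lemma~\ref{b:lem:mu} applied to $\mu\in\Delta_1(\eta,I^\vee)$ (which forces $\mu_i=\eta_i+1$), using that $K\neq\emptyset$. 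Hence $\Delta_1(v^1,I^\vee)=\emptyset$, and by lemma~\ref{b:lem:12}, $\Delta_2(v^1,I^\vee)=\emptyset$. Combining the two cases, $\Delta_j(v^1,I^\vee)=\emptyset$ for every $j\notin J_1$, so $J_2\subseteq J_1$, and $J_2\neq\emptyset$ as shown above.

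The main obstacle I anticipate is the bookkeeping in the $i\in K$ case: one has to be sure that the element produced by proposition~\ref{a:prop:valquimonte} (or by directly invoking the maximality of $\eta$ via a raised $\eta'$) really lies in $\mc{E}$ and really witnesses $\Delta_1(\eta',I^\vee)\neq\emptyset$ with $\eta'$ strictly above $\eta$ in some coordinate $\geqslant 3$ — i.e.\ matching precisely the hypotheses of clause~(2) of lemma~\ref{b:lem:eta}. The coordinates in $J_1$ need watching because there $v^1_j=\beta_j-1$ is as large as $\mc{E}$ allows, so ``moving up'' there leaves $\mc{E}$; the argument must stay within $\mc{E}$, which is why the contradiction is extracted on a coordinate in $K$ rather than in $J_1$.
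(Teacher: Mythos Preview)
Your proof is correct and follows essentially the same route as the paper: lemma~\ref{b:lem:E} for $J_2\neq\emptyset$, the maximality of $\eta$ to exclude indices $1,2$, and proposition~\ref{a:prop:valquimonte} applied against an element $\mu'\in\Delta_1(\eta,I^\vee)$ (via lemma~\ref{b:lem:mu}) to exclude indices in $K$. Two minor remarks: your worry about the $J_1$ coordinates in the $i\in K$ case is unfounded, since $\mu_j>v^1_j=\beta_j-1$ already gives $\mu_j\geqslant\beta_j$, so $\min(\mu_j,\mu'_j)\geqslant\beta_j>\eta'_j$ and the element produced by proposition~\ref{a:prop:valquimonte} lands in $\Delta_1(\eta',I^\vee)$ with no extra work; and for indices $1,2$ the paper simply invokes clause~(2) of lemma~\ref{b:lem:eta} directly (as $v^1\in\mc{E}$, $v^1\geqslant\eta$, and $v^1_i>\eta_i$ for $i\in K\neq\emptyset$), which is quicker than your detour through lemma~\ref{b:lem:mu}.
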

\begin{proof}
It follows from the definition of $\eta$ that $\Delta_1(v^1,I^\vee)=\Delta_2(v^1,I^\vee)=\emptyset$. Let us assume that there exists $i_0\in K$ such that $\Delta_{i_0}(v^1,I^\vee)\neq \emptyset$. Let $w\in\Delta_{i_0}(v^1,I^\vee)$. Let $\mu\in\Delta_1(\eta,I^\vee)$ as in lemma~\ref{b:lem:mu}. Since $\mu_{i_0}=w_{i_0}=\eta_{i_0}+1$ and for all $i\in K\backslash\lra{i_0}$, $\mu_i=\eta_i+1<w_i$, by proposition~\ref{a:prop:valquimonte} applied to the triple $(\mu,w,i_0)$, there exists $w'\in\val(I^\vee)$ such that $w_1'=\mu_1=\beta_1$, $w_2'\geqslant\min(w_2,\mu_2)>\beta_2$, for all $i\in K\backslash i_0$, $w_i'=\mu_i=\eta_i+1$, $w_{i_0}'>\eta_{i_0}+1$ and for all $j\in J_1$, $w_j\geqslant \min(\mu_j,w_j)\geqslant \beta_j$. Let $\eta'\in\mc{E}$ be such that for $i\neq i_0$, $\eta_i'=\eta_i$ and $\eta_{i_0}'=\eta_{i_0}+1$. Then $w'\in\Delta_1(\eta',I^\vee)$, which leads to a contradiction with the definition of $\eta$. Therefore, $J_2\subseteq J_1$. In addition, since $v^1\in\mc{E}$, we have $J_2\neq \emptyset$ by lemma~\ref{b:lem:E}.
\end{proof}

\begin{prop}
\label{prop:ite}
 Let $q\geqslant 2$. Let us assume that there exist a sequence of sets $$J_q\subseteq J_{q-1}\subseteq
\dots\subseteq J_2\subseteq J_1$$ and a sequence of elements of $\mc{E}$ $$v^1,v^2,\ldots, v^{q-1}$$ such that, if $q\geqslant 3$, for all $\ell\in\lra{2,\ldots,q-1}$, we have: 
\begin{enumerate}
\item for all $i\notin J_\ell, v_i^\ell=v_i^{\ell-1}$,
\item for all $j\in J_\ell, v_j^\ell=v_j^{\ell-1}-1$,
\item $J_{\ell+1}=\lra{j\in\unp\vert \Delta_j(v^\ell,I^\vee)\neq\emptyset}$. 
\end{enumerate}
Let $v^q\in\mc{E}$ be such that for all $i\notin J_q, v_i^q=v_i^{q-1}$ and for all $j\in J_q, v_j^q=v_j^{q-1}-1$. Let $J_{q+1}=\lra{j\in\unp\vert \Delta_j(v^q,I^\vee)\neq \emptyset}$. 
Then $J_{q+1}\subseteq J_q$ and $J_{q+1}\neq \emptyset$.
\end{prop}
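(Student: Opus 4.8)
The statement is the inductive step of a downward induction that mirrors Proposition~\ref{b:prop:initialisation}. The plan is to treat $v^q$ and $J_{q+1}$ exactly as $v^1$ and $J_2$ were treated there, using the maximality property of $\eta$ from lemma~\ref{b:lem:eta} as the source of all contradictions. First I would record, from the inductive hypotheses, the position of $v^q$ relative to $\eta$ and $\beta$: by construction each coordinate in $K$ has only been decreased by the step at level $\ell$ when $i\in J_\ell$, but $J_2\subseteq J_1=\lra{i\mid\eta_i=\beta_i-1}$ and $K\cap J_1=\emptyset$, so in fact no coordinate of $K$ is ever touched, i.e. $v^q_i=v^1_i=\eta_i+1$ for all $i\in K$, while the $J_1$-coordinates have been pushed strictly below $\beta_j-1$. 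In particular $v^q\geqslant\eta$ fails in the $J$-coordinates but $v^q$ still lies in $\mc{E}$ and agrees with $\eta$ on $K\cup\lra{1,2}$ up to the $+1$ shift on $K$; the key point to extract is that $v^q_1=\beta_1$, $v^q_2=\beta_2$, $v^q_i=\eta_i+1$ for $i\in K$.

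Next I would show $\Delta_1(v^q,I^\vee)=\Delta_2(v^q,I^\vee)=\emptyset$: this is immediate from lemma~\ref{b:lem:eta}(2), since $v^q\in\mc{E}$, $v^q\geqslant\eta$ is false only because of strict decreases in $J_1$-coordinates — so I must instead argue directly. If $w\in\Delta_1(v^q,I^\vee)$ then $w_1=\beta_1$, and the proposition~\ref{a:prop:valquimonte} swap on the triple $(\beta,w,1)$ produces $w'\in\val(I^\vee)$ with $w'_1>\beta_1$, $w'_2=\beta_2$, $w'_i\geqslant\min(w_i,\beta_i)$; since on $K$ we have $\min(w_i,\beta_i)\geqslant\min(\eta_i+2,\beta_i)>\eta_i+1$ and on $J_1$ we get $w'_j\geqslant\beta_j>\eta_j$, this places $w'\in\Delta_1(\eta',I^\vee)$ for the element $\eta'\in\mc{E}$ obtained from $\eta$ by raising one $K$-coordinate by one — contradicting the maximality of $\eta$. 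Then lemma~\ref{b:lem:12} gives the vanishing of $\Delta_2$ as well, so $1,2\notin J_{q+1}$.

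For $J_{q+1}\subseteq J_q$ the argument follows Proposition~\ref{b:prop:initialisation} verbatim: suppose $i_0\notin J_q$ but $\Delta_{i_0}(v^q,I^\vee)\neq\emptyset$. If $i_0\in K$, pick $w\in\Delta_{i_0}(v^q,I^\vee)$ and $\mu\in\Delta_1(\eta,I^\vee)$ as in lemma~\ref{b:lem:mu}; since $\mu_{i_0}=\eta_{i_0}+1=v^q_{i_0}=w_{i_0}$ and $\mu_i=\eta_i+1<w_i$ for $i\in K\setminus\lra{i_0}$, proposition~\ref{a:prop:valquimonte} on $(\mu,w,i_0)$ yields $w'$ with $w'_1=\beta_1$, $w'_2>\beta_2$, $w'_i=\eta_i+1$ for $i\in K\setminus\lra{i_0}$, $w'_{i_0}>\eta_{i_0}+1$, $w'_j\geqslant\beta_j$ for $j\in J_1$, hence $w'\in\Delta_1(\eta',I^\vee)$ with $\eta'$ obtained from $\eta$ by $\eta'_{i_0}=\eta_{i_0}+1$ — again contradicting maximality of $\eta$. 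If instead $i_0\in J_1\setminus J_q$, then at some earlier level $\ell\leqslant q$ one had $i_0\notin J_{\ell}$, i.e. $\Delta_{i_0}(v^{\ell-1},I^\vee)=\emptyset$ with $v^{\ell-1}_{i_0}=v^q_{i_0}$ (the $J_1$-coordinate $i_0$ has been frozen from that point on, since $i_0\notin J_m$ for $m\geqslant\ell$ by $J_{q}\subseteq\cdots\subseteq J_\ell$ and $i_0\notin J_\ell$); an element of $\Delta_{i_0}(v^q,I^\vee)$ would, after an $\inf$ with a suitable element witnessing $\Delta_{J_q}(v^q,\cdot)$, descend to an element of $\Delta_{i_0}(v^{\ell-1},I^\vee)$, contradiction — this reduction is where the hypotheses (1)–(2) on the $v^\ell$ are used. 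Finally $J_{q+1}\neq\emptyset$ because $v^q\in\mc{E}$, so $\Delta(v^q,I^\vee)\neq\emptyset$ by lemma~\ref{b:lem:E}.

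\textbf{Main obstacle.} The delicate point is the case $i_0\in J_1\setminus J_q$ of the inclusion $J_{q+1}\subseteq J_q$: the clean contradiction from maximality of $\eta$ is available only for indices in $K$, and for a once-active-now-inactive index in $J_1$ one must instead descend through the construction history, carefully tracking that such a coordinate has been frozen since the level where it dropped out of the chain and controlling the other coordinates so that a $\Delta_{i_0}$-witness at level $q$ forces one at the earlier level. Getting the bookkeeping of frozen versus decreased coordinates right — and making sure the $\inf$ used to produce the descent actually lands in the right $\Delta_{i_0}$ — is the part requiring the most care.
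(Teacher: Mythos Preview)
Your plan has a genuine gap: you try to reuse the argument of Proposition~\ref{b:prop:initialisation} by pushing everything back to $\eta$ and the element $\mu\in\Delta_1(\eta,I^\vee)$, but that argument relied on the fact that $v^1_j=\beta_j-1$ for every $j\in J_1$, so that any $w\in\Delta_{i_0}(v^1,I^\vee)$ automatically satisfies $w_j\geqslant\beta_j$ on $J_1$. For $q\geqslant 2$ this fails: the coordinates indexed by $J_q$ have been lowered to $v^q_j=\beta_j-q$, so an element $w\in\Delta_{i_0}(v^q,I^\vee)$ may well have $w_j<\beta_j$ for such $j$. Your claimed inequality ``$w'_j\geqslant\beta_j$ for $j\in J_1$'' (in both the $i_0\in\{1,2\}$ and the $i_0\in K$ cases) is therefore unjustified, and the contradiction with the maximality of $\eta$ does not go through. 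The sketch for $i_0\in J_1\setminus J_q$ --- ``descend to $\Delta_{i_0}(v^{\ell-1},I^\vee)$ via an $\inf$'' --- does not work either, since $\inf$ lowers coordinates whereas you need to \emph{raise} them to reach the level $v^{\ell-1}\geqslant v^q$.

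The paper's proof avoids all of this by never looking back further than $v^{q-1}$. The single observation is that $J_q=\{j\mid\Delta_j(v^{q-1},I^\vee)\neq\emptyset\}$, so for any $i_0\notin J_q$ one has $\Delta_{i_0}(v^{q-1},I^\vee)=\emptyset$. Hence any $w\in\Delta_{i_0}(v^q,I^\vee)$ (which already satisfies $w_{i_0}=v^{q-1}_{i_0}$, $w_i>v^{q-1}_i$ for $i\notin J_q\cup\{i_0\}$, and $w_j\geqslant v^{q-1}_j$ for $j\in J_q$) must have $w_{j_0}=v^{q-1}_{j_0}$ for at least one $j_0\in J_q$. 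Choose $w$ minimising the number $s\geqslant 1$ of such tight indices; since $j_0\in J_q$ there is $w'\in\Delta_{j_0}(v^{q-1},I^\vee)$, and applying Proposition~\ref{a:prop:valquimonte} to $(w,w',j_0)$ produces a new element of $\Delta_{i_0}(v^q,I^\vee)$ with only $s-1$ tight indices, a contradiction. This uniform argument covers $i_0\in\{1,2\}$, $i_0\in K$, and $i_0\in J_1\setminus J_q$ simultaneously --- no case split, no reference to $\eta$ or $\mu$.
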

\begin{remar}
One can notice that for all $\ell\in\lra{1,\ldots,q-1}$, for all $m\in\lra{1,\ldots,\ell-1}$ and for all $j\in J_m\backslash J_{m+1}$, $v_j^\ell=\beta_j-m$ and for all $j\in J_\ell$, $v_j^\ell=\beta_j-\ell$.  
\end{remar}

\begin{proof}
Let us assume that there exists $i_0\in \unp\backslash J_q$  such that $\Delta_{i_0}(v^q,I^\vee)\neq\emptyset$. If $w\in\Delta_{i_0}(v^q,I^\vee)$, then:
\begin{itemize}
\item $w_{i_0}=v_{i_0}^q=v_{i_0}^{q-1}$,
\item for all $i\notin J_q\cup{i_0}$, $w_i>v_i^q=v_i^{q-1}$,
\item for all $i\in J_q$, $w_i\geqslant v_i^{q}+1=v_i^{q-1}$.
\end{itemize}

 Since $i_0\notin J_q$, from the definition of $J_q$ we have $\Delta_{i_0}(v^{q-1},I^\vee)=\emptyset$. Therefore, there exists $j_0\in J_q$ such that $w_{j_0}=v_{j_0}^{q-1}=v_{j_0}^q+1$.

We set $$s=\inf_{w\in\Delta_{i_0}(v^q,I^\vee)} \lrp{\mathrm{Card}\lrp{\lra{j\in J_q\vert w_j=v_j^{q-1}}}}.$$

We then have $s\geqslant 1$. 

 Let us choose an element $w\in\Delta_{i_0}(v^q,I^\vee)$ such that the cardinality of $\lra{j\in J_q\vert w_j=v_j^{q-1}}$ is $s$. 
 
 Let $j_0\in\lra{j\in J_q\vert w_j=v_j^{q-1}}$. Since $j_0\in J_q$, there exists $w'\in\Delta_{j_0}(v^{q-1},I^\vee)$. We thus have:
\begin{itemize}
\item $w_{j_0}'=w_{j_0}=v_{j_0}^{q-1}= v_{j_0}^q+1$,
\item $w_{i_0}'>w_{i_0}=v^{q-1}_{i_0}$,
\item for all $i\notin J_q\cup \lra{i_0}$, $w_i'\supe v^{q-1}_i=v^q_i$,
\item for all $j\in J_q\backslash \lra{j_0}$, $w_j'\supe v_j^{q-1}\supe v_j^q$.
\end{itemize}

By proposition~\ref{a:prop:valquimonte} applied to the triple $(w,w',j_0)$, there exists $u\in\val(I^\vee)$ such that for all $i\notin J_q\cup\lra{i_0}$, $u_i>v_i^{q-1}=v_i^q$, $u_{i_0}=v_{i_0}^{q-1}=v_{i_0}^q$, $u_{j_0}>v_{j_0}^{q-1}>v_{j_0}^q$ and for all $j\in J_q\backslash \lra{j_0}$, $u_j\geqslant\min(w_j,w_j')>v_j^q=v_j^{q-1}-1$. In addition, for all $j\in J_q\backslash\lra{j_0}$, if $w_j>v_j^{q-1}$, then $u_j>v_j^{q-1}$.  Therefore, $u\in\Delta_{i_0}(v^q,I^\vee)$ and $\mathrm{Card}\lrp{\lra{j\in J_q\vert u_j=v_j^{q-1}}}=s-1$, which contradicts the minimality of  $s$. Hence the result: $J_{q+1}\subseteq J_q$.

By lemma~\ref{b:lem:E}, for all $\ell\in\lra{1,\ldots,q}$, since $v^\ell\in\mc{E}$, we have $\Delta(v^\ell,I^\vee)\neq \emptyset$, thus $J_{\ell+1}\neq \emptyset$. 
\end{proof}

We can now finish the proof of theorem~\ref{intro:theo:sym-max}.

By proposition~\ref{prop:ite}, for all $\ell\in\N$, $v^\ell\in\mc{E}$ so that $\Delta(v^\ell,I^\vee)\neq \emptyset$ and we have $J_{\ell+1}\neq \emptyset$. There exists $q\in\N$ such that for all $j\in J_1$, $\beta_j-q<\lambda_j$ where $\lambda$ is defined in lemma~\ref{b:lem:lambda}. Since for all $j\in J_q$, $v_j^q=\beta_j-q<\lambda_j$, $\Delta_j(v^q,I^\vee)=\emptyset$, which is impossible, and which finishes the proof of the missing implication of theorem~\ref{intro:theo:sym-max}: if $\beta$ is an absolute maximal of $I$ then $\alpha$ is a relative maximal of~$I$. 
\end{proof}

\begin{remar}
Since we only used combinatorial properties of the set of values in the proof of  theorem~\ref{intro:theo:sym-max}, this theorem can also be stated in the context of good semigroup ideals as considered in \cite{kts}. 
\end{remar}

\section{Description of the set of values of a fractional ideal}
\label{sec:computation}

\subsection{The generation theorem}

The purpose of this subsection is to generalize the generation theorem \cite[Theorem 1.5]{delgado2}. If $J\subseteq \lra{1,\ldots,p}$, we denote by $pr_J : \Z^p\to \Z^{|J|}$ the surjective map defined by $pr_J(v_1,\ldots,v_p)=(v_j)_{j\in J}$.  

 It follows from remark~\ref{remar:max:finite} that the set of the relative maximals of any fractional ideal is finite. 

\begin{theo}
\label{theo:gen}
Let $R$ be a local admissible ring, and $I\subseteq \mathrm{Frac}(R)$ be a fractional ideal. 

Let $RM(I)=\lra{\alpha^1,\ldots,\alpha^q}$ be the set of the relative maximals of $I$. Let $v\in\Z^p$ be such that for all $J\subseteq \lra{1,\ldots,p}$ with $|J|=p-1$, we have $pr_J(v)\in pr_J(\val(I))$. Then $v\in\val(I)$ if and only if for all $i\in\lra{1,\ldots, q}$, $v\notin\Delta(\alpha^i,\Z^p)$. 
\end{theo}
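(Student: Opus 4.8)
The plan is to prove both implications, the easy direction first. Assume $v\in\val(I)$. If some $v\in\Delta(\alpha^i,\Z^p)$ for a relative maximal $\alpha^i$, then $v\in\Delta_j(\alpha^i,\Z^p)$ for some $j$, meaning $v_j=\alpha^i_j$ and $v_\ell>\alpha^i_\ell$ for $\ell\neq j$; combined with $v\in\val(I)$ this would place $v\in\Delta_j(\alpha^i,I)\subseteq\Delta(\alpha^i,I)$, contradicting $\Delta(\alpha^i,I)=\emptyset$ (which holds since $\alpha^i$ is a maximal). So the forward implication is immediate from the definition of relative maximal.

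\textbf{The converse.} For the converse, suppose $v\notin\val(I)$ but $pr_J(v)\in pr_J(\val(I))$ for every $J$ of size $p-1$; I must produce a relative maximal $\alpha^i$ with $v\in\Delta(\alpha^i,\Z^p)$. First I would use the hypothesis on projections: for each $k\in\unp$, applying it to $J=\unp\setminus\{k\}$ gives some $w^k\in\val(I)$ with $w^k_\ell=v_\ell$ for all $\ell\neq k$. Since $v\notin\val(I)$, necessarily $w^k_k\neq v_k$, and I claim $w^k_k>v_k$: indeed if $w^k_k<v_k$ for some $k$, then taking the infimum of $v$ against suitable elements would force $v\in\val(I)$ by Proposition~\ref{a:prop:inf} — more carefully, $\inf(w^k,w^m)$ for $k\neq m$ agrees with $v$ outside $\{k,m\}$, and iterating $\inf$ over all the $w^k$ gives a value $\leqslant v$ agreeing with $v$ wherever all but at most one coordinate is "large"; combined with the good-semigroup-ideal axioms this yields a contradiction unless all $w^k_k>v_k$. (This is exactly the kind of argument where Proposition~\ref{a:prop:valquimonte} gets used to climb up coordinates.) Once I know each $w^k$ has $w^k_k>v_k=w^k_\ell$ for $\ell\neq k$, I set $\alpha:=\inf_{k\in\unp}(w^k)$. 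Then $\alpha\in\val(I)$ by Proposition~\ref{a:prop:inf}, and a short check shows $\alpha_\ell=v_\ell$ for every $\ell$ — wait, that cannot be since $v\notin\val(I)$; rather the correct construction is to take, for each pair this forces $\alpha$ to have $\alpha_k\geqslant v_k$ with equality failing in a controlled way. Let me reorganize: I would instead argue that $v$ lies in $\Delta(\alpha,\Z^p)$ for $\alpha:=\inf_k w^k$ only after checking $\alpha< v$ coordinatewise is impossible, and that $\alpha_k=v_k$ for at least one $k$ forces, via $v\notin\val(I)$ and Proposition~\ref{a:prop:valquimonte}, the configuration $\Delta_k(\alpha,I)=\emptyset$.

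\textbf{Identifying the relative maximal.} So the heart of the argument is: from the $w^k$ construct an $\alpha\in\val(I)$ such that $v\in\Delta(\alpha,\Z^p)$, i.e.\ for some index $i$, $v_i=\alpha_i$ and $v_\ell>\alpha_\ell$ for $\ell\neq i$; then show $\alpha$ can be taken to be a relative maximal. For the first part I would take $\alpha:=\inf\{w^k : k\in\unp\}$, note $\alpha\in\val(I)$, observe $\alpha_\ell\leqslant v_\ell$ for all $\ell$ with equality for at least... actually since $w^k_\ell=v_\ell$ for all $\ell\neq k$, we get $\alpha_\ell=\min(v_\ell, w^\ell_\ell)=v_\ell$ whenever $w^\ell_\ell\geqslant v_\ell$, which holds as established; and $\alpha_k=\min(v_k,\ldots)$ — but all $w^m$ with $m\neq k$ have $m$-th coordinate... no: $w^m_k=v_k$ for $m\neq k$, and $w^k_k>v_k$, so $\alpha_k=v_k$ too. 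That gives $\alpha=v\in\val(I)$, contradiction — so the naive infimum is wrong and the $w^k$ cannot all exist simultaneously with these exact coordinates unless $v\in\val(I)$. The resolution: the projection hypothesis only gives $pr_J(v)\in pr_J(\val(I))$, so $w^k$ agrees with $v$ on $J=\unp\setminus\{k\}$ but this is still a real constraint; the point is that $v\notin\val(I)$ then forces, for the minimal such configuration, that these $w^k$ "compete" and a careful descent (decreasing coordinates as in Proposition~\ref{prop:ite}) terminates at a relative maximal $\alpha$ with $v\in\Delta(\alpha,\Z^p)$.

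\textbf{Main obstacle.} The main obstacle will be precisely this descent: showing that starting from the family $\{w^k\}$ (equivalently, from the fact that $v$ is "almost" in $\val(I)$ in the strong sense that all codimension-one projections lift) one can locate a relative maximal $\alpha$ below $v$ with $v\in\Delta(\alpha,\Z^p)$, and moreover that it is genuinely a relative maximal and not merely a maximal. I expect to invoke Lemma~\ref{a:lem:relmax} at the end: once I have $\alpha\in\val(I)$ with an index $i$ satisfying $\Delta_i(\alpha,I)=\emptyset$ and $\Delta_{i,j}(\alpha,I)\neq\emptyset$ for all $j\neq i$ — the latter produced from the $w^k$ themselves — Lemma~\ref{a:lem:relmax} immediately gives that $\alpha$ is a relative maximal, and $v\in\Delta_i(\alpha,\Z^p)$ completes the proof. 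The finiteness of $RM(I)$ from Remark~\ref{remar:max:finite} guarantees $\alpha\in\{\alpha^1,\ldots,\alpha^q\}$. Packaging the descent cleanly — ideally by an $\inf$-and-climb argument using Propositions~\ref{a:prop:inf} and~\ref{a:prop:valquimonte} rather than an explicit induction — is the delicate point.
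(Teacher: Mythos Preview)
Your forward implication is fine and matches the paper. The converse, however, has a genuine gap: you correctly identify that the crux is to manufacture a relative maximal $\alpha$ with $v\in\Delta(\alpha,\Z^p)$ and that Lemma~\ref{a:lem:relmax} should close the argument, but you never actually construct $\alpha$. Your attempt via the witnesses $w^k$ (with $w^k_\ell=v_\ell$ for $\ell\neq k$) runs into exactly the obstruction you notice: the infimum $\inf_k w^k$ equals $v$ whenever every $w^k_k\geqslant v_k$, which only tells you that some $w^k_k<v_k$ for any choice of lifts---it does not hand you a relative maximal, and your claim that all $w^k_k>v_k$ is false in general. After that point your proposal defers the whole construction to an unspecified ``careful descent,'' which is precisely the content of the theorem.

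The paper's proof supplies this missing descent explicitly, and it is not the $\inf$-and-climb you sketch. It first replaces your family $\{w^k\}$ by the observation that $\Delta_j^p(v,I)=\emptyset$ for some $j$ (say $j=1$), where $\Delta_J^i(\alpha,I)$ allows non-strict inequalities on the first $i$ coordinates. Then it runs a downward induction on $i\in\{1,\ldots,p\}$: at each step one lowers a single coordinate $v_i$ to the largest $v_i'<v_i$ for which a suitable witness $\eta^i\in\Delta^i_1(v^*,I)$ exists (using the projection hypothesis to guarantee existence), thereby producing $v_i'<v_i$ together with $\eta^i\in\Delta_{1,i}^{i-1}$ and preserving $\Delta_1^{i-1}=\emptyset$. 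When the induction reaches $i=1$ one has a point $v'=(v_1,v_2',\ldots,v_p')$ with $v_k'<v_k$ for $k\geqslant 2$, $\Delta_1(v',I)=\emptyset$, and $\Delta_{1,k}(v',I)\neq\emptyset$ for every $k\geqslant 2$; Lemma~\ref{a:lem:relmax} then makes $v'$ a relative maximal, and $v\in\Delta_1(v',\Z^p)$ by construction. The key idea you are missing is this ordered, coordinate-by-coordinate lowering (choosing $v_i'$ \emph{maximal} so that the emptiness of $\Delta_1^{i-1}$ is forced), rather than a global $\inf$.
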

\begin{proof}
We extend the proof of \cite[Theorem 1.5]{delgado2} as follows. The first implication is immediate, since for all $i\in\unp$, $\Delta(\alpha^i,I)=\emptyset$. 

Let us denote for $i\in\unp$, $J\subseteq\unp$ and $\alpha\in\Z^p$:
$$\Delta_J^i(\alpha,I)=\lra{w\in \val(I)\vert \forall j\in J, w_j=\alpha_j \text{ and } \forall r\leqslant i, w_r\geqslant \alpha_r \text{ and } \forall s>i, s\notin J, w_s>\alpha_s}.$$

Let $v$ be such that for all $J\subseteq \lra{1,\ldots,p}$ with $|J|=p-1$, we have $pr_J(v)\in pr_J(\val(I))$, and such that for all $i\in\lra{1,\ldots,q}$, $v\notin\Delta(\alpha^i,\Z^p)$. Let us assume that $v\notin \val(I)$. Then, there exists $j\in\unp$ such that $\Delta_j^p(v,I)=~\emptyset$. Indeed, if for all $j\in\unp$, $\emptyset \neq  \Delta_j^p(v,I)\ni w^j$, then $v=\inf(w^1,\ldots,w^p)\in\val(I)$, which contradicts our assumption. By renumbering the branches, we can assume that $\Delta_1^p(v,I)=\emptyset$.

Let $i\in\unp$ be the smallest integer such that there exists $v_{i+1}',\ldots,v_p'\in\Z$, $\eta^{i+1},\ldots,\eta^p\in\val(I)$ such that by denoting $v^{(i+1)'}=(v_1,\ldots,v_i,v_{i+1}',\ldots,v_p')$  we have:
\begin{itemize}
\item for all $k\in\lra{i+1,\ldots,p}$, $v_k'<v_k$,
\item for all $k\in\lra{i+1,\ldots,p}$, $\eta^k\in\Delta_{1,k}^i(v^{(i+1)'},I)$,
\item $\Delta_1^i(v^{(i+1)'},I)=\emptyset$. 
\end{itemize} 
 One can notice that this condition is always satisfied for $i=p$ since we then have $v=v^{(p+1)'}$ so that $\Delta_1^p(v^{(p+1)'},I)=\emptyset$. 
 
 Let us assume that $i>1$. We recall that there exists $\lambda\in\N^p$ such that $\val(I)\subseteq\lambda+\N^p$. We set $v^{*}=(v_1,\ldots,v_{i-1},\lambda_i,v_{i+1}',\ldots,v_p')$. 
 
 By assumption, we have $(v_1,\ldots,v_{i-1},v_{i+1},\ldots,v_p)\in pr_{\lra{1,\ldots,i-1,i+1,\ldots,p}}(I)$ so that $\Delta^i_1(v^{*},I)\neq \emptyset$.
 
 In addition, since $\Delta_1^i(v^{(i+1)'},I)=\emptyset$, if $(v_1,w_2,\ldots,w_p)\in \Delta_1^i(v^*,I)$ then $w_i<v_i$.  Let $v_i'$ be the maximal integer such that there exists $\eta^i=(v_1,w_2,\ldots,w_{i-1},v_i',w_{i+1},\ldots,w_p)\in\Delta_1^i(v^*,I)$. In particular, $v_i'<v_i$. We set $v^{(i)'}=(v_1,\ldots,v_{i-1},v_i',v_{i+1}',\ldots,v_p')$. Then $\eta^i\in\Delta_{1,i}^{i-1}(v^{(i)'},I)$, and for all $k\in\lra{i+1,\ldots,p}$, $\eta^k\in \Delta_{1,k}^{i-1}(v^{(i)'},I)$. In addition, since $v_i'$ is maximal, $\Delta_1^{i-1}(v^{(i)'},I)=\emptyset$, so that $i$ was not minimal. 

\smallskip

Therefore, $i=1$, which means that there exists $v_2',\ldots,v_p'\in\N$ such that for all $k\in\lra{2,\ldots,p}$, $v_k'<v_k$, there exits $\eta^k\in\Delta_{1,k}(v^{(2)'},I)$, and $\Delta_1(v^{(2)'},I)=\emptyset$. By lemma~\ref{a:lem:relmax}, it means that $v^{(2)'}$ is a relative maximal of $I$. In addition, $v\in\Delta_1(v^{(2)'},\Z^p)$, which contradicts the hypothesis that for all relative maximal $\alpha$, $v\notin\Delta(\alpha,\Z^p)$. Therefore, $v\in\val(I)$. 
\end{proof}

For an irreducible curve, the computation of the set of values of an ideal is described in \cite{hefez-standard}. An algorithm for two branches is suggested in \cite{polvalues}. If the set of relative maximals is known, the previous theorem can be used to compute the set of values of an ideal of a curve with three branches, and by induction for an arbitrary number of branches, provided that at each step the relative maximals are known. 

The question is therefore to compute the relative maximals of $I$. Thanks to theorem~\ref{intro:theo:sym-max}, it is equivalent to the computation of the absolute maximals of $I^\vee$. In the following, we prove that we only need to know the set of irreducible absolute maximals which is defined below and generalizes \cite[Remark 3.11]{delgado2}.

\begin{prop}
Let $R$ be a local admissible ring and let $I\subseteq \mathrm{Frac}(R)$ be a fractional ideal. Let $v\in\val(I)$ be an absolute maximal. Let us assume that $v=\alpha+\beta$ with $\alpha\in\val(R)$ and $\beta\in\val(I)$. Then  $\alpha$ is an absolute maximal of $R$ and $\beta$ is an absolute maximal of $I$. 
\end{prop}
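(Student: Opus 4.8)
The plan is to prove each of the two assertions by contradiction, using a single elementary fact: since $I$ is an $R$-module and each discrete valuation is additive on non zero divisors, $\val(I)$ is an ideal over the semigroup $\val(R)$, i.e.\ $\val(R)+\val(I)\subseteq\val(I)$. (If $r\in R$ and $g\in I$ are non zero divisors, then $rg\in I$ is again a non zero divisor and $\val(rg)=\val(r)+\val(g)$.) I would first record this observation, then use it to transport a hypothetical ``bad'' $\Delta_J$-element witnessing the failure of absolute maximality of $\alpha$ or of $\beta$ into a bad $\Delta_J$-element at $v$, contradicting that $v$ is an absolute maximal of $I$.

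For $\beta$: suppose it is not an absolute maximal of $I$. Then there is a $J\subseteq\unp$ with $J\neq\emptyset$, $J\neq\unp$, and some $\beta'\in\Delta_J(\beta,I)$, so $\beta'\in\val(I)$, $\beta'_j=\beta_j$ for $j\in J$ and $\beta'_j>\beta_j$ for $j\notin J$. Put $w=\alpha+\beta'$. Then $w\in\val(I)$ by the ideal property, $w_j=\alpha_j+\beta_j=v_j$ for $j\in J$, and $w_j>\alpha_j+\beta_j=v_j$ for $j\notin J$; hence $w\in\Delta_J(v,I)$ with $J$ proper and nonempty, contradicting that $v$ is an absolute maximal. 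Since $\beta\in\val(I)$ by hypothesis, this proves $\beta$ is an absolute maximal of $I$.

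For $\alpha$: the argument is symmetric. If $\alpha$ is not an absolute maximal of $R$, pick $J\subseteq\unp$ with $J\neq\emptyset$, $J\neq\unp$, and $\alpha'\in\Delta_J(\alpha,R)$, so $\alpha'\in\val(R)$, $\alpha'_j=\alpha_j$ for $j\in J$, $\alpha'_j>\alpha_j$ for $j\notin J$. Then $w=\alpha'+\beta\in\val(I)$ satisfies $w_j=v_j$ for $j\in J$ and $w_j>v_j$ for $j\notin J$, so $w\in\Delta_J(v,I)$, again contradicting absolute maximality of $v$; as $\alpha\in\val(R)$, this gives the claim. The only point requiring (minor) care is the legitimacy of $\val(R)+\val(I)\subseteq\val(I)$ in the admissible setting, which follows from additivity of each discrete valuation on non zero divisors together with the fact that a product of non zero divisors is a non zero divisor; everything else is a direct index chase with the definition of $\Delta_J$.
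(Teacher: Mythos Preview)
Your proof is correct and follows essentially the same approach as the paper's own proof: both argue by contradiction, translating a witness $\beta'\in\Delta_J(\beta,I)$ (resp.\ $\alpha'\in\Delta_J(\alpha,R)$) to $\alpha+\beta'\in\Delta_J(v,I)$ (resp.\ $\alpha'+\beta\in\Delta_J(v,I)$) via the ideal property $\val(R)+\val(I)\subseteq\val(I)$. The paper's version is slightly terser (it handles $\beta$ explicitly and dismisses $\alpha$ with ``a similar argument''), but the content is identical.
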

\begin{proof}
Let us assume that $\beta$ is not an absolute maximal of $I$. Then there exists $J\subseteq \lra{1,\ldots,p}$ with $J\neq\emptyset$ and $J\neq\lra{1,\ldots,p}$ such that there exists $w\in\Delta_J(\beta,I)$. Then, $\alpha+w\in\val(I)$ since $\alpha\in\val(R)$ and $w\in\val(I)$. Thus $\alpha+w\in\Delta_J(v,I)$, which contradicts the fact that $v$ is an absolute maximal of $I$. A similar argument can be used to prove that $\alpha$ is an absolute maximal of $R$. 
\end{proof}

\begin{de}
Let $v\in\val(I)$ be an absolute maximal. We call $v$ an \emph{irreducible absolute maximal} if it cannot be written as $v=\alpha+\beta$ with $\alpha\in\val(R)\backslash \lra{0}$ and $\beta\in\val(I)$. More generally, if $v\in\val(I)$, we say that $v$ is \emph{irreducible} if for all $a\in\val(\co_C)$ and for all $b\in\val(I)$, the condition $v=a+b$ implies $a=0$. 
\end{de}

The following proposition is a direct consequence of the definition:

\begin{prop}
Let $G=\lra{g_1,\ldots, g_q}$ be the set of irreducible absolute maximals of $R$ and $\lra{\alpha_1,\ldots,\alpha_r}$ be the set of irreducible absolute maximals of $I$. Then the set of absolute maximals of $I$ is contained in the set $\bigcup_{i=1}^r \lrp{\N g_1+\N g_2+\cdots +\N g_q+ \alpha_i}$. 

\end{prop}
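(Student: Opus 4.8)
The plan is to deduce the inclusion from the preceding proposition by a finite descent, using that $R$ is itself a fractional ideal. For $v\in\Z^p$ write $|v|=v_1+\cdots+v_p$. Since $\val(R)\subseteq\N^p$ and, by \eqref{eq:lambda}, $\val(I)\subseteq\lambda+\N^p$ for some $\lambda\in\Z^p$, the integer $|v|$ drops strictly whenever one peels off a nonzero element of $\val(R)$, and on $\val(I)$ it is bounded below by $|\lambda|$; this makes it a legitimate induction parameter.

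First I would prove the auxiliary claim obtained by taking $R$ in the role of $I$: every absolute maximal $\delta$ of $R$ is a finite sum $g_{k_1}+\cdots+g_{k_s}$ of irreducible absolute maximals of $R$ (with $s=0$, i.e.\ $\delta=0$, allowed). This goes by induction on $|\delta|\in\N$, using the preceding proposition applied to the fractional ideal $R$: either $\delta$ is an irreducible absolute maximal of $R$, so $\delta\in G$, or $\delta=c+d$ with $c\in\val(R)\setminus\lra{0}$ and $d\in\val(R)$, in which case $|c|,|d|<|\delta|$ and both $c$ and $d$ are again absolute maximals of $R$, so the induction hypothesis applies to each. Then, given an absolute maximal $v$ of $I$, I would argue by induction on $|v|-|\lambda|\in\N$. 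If $v$ is an irreducible absolute maximal of $I$ we are done, as $v=\alpha_i$ for some $i$. Otherwise $v=\alpha+\beta$ with $\alpha\in\val(R)\setminus\lra{0}$ and $\beta\in\val(I)$; as $\alpha\in\N^p\setminus\lra{0}$ we have $\beta\leqslant v$, $\beta\neq v$, hence $|\beta|<|v|$, while $\beta\geqslant\lambda$. By the preceding proposition, $\alpha$ is an absolute maximal of $R$ and $\beta$ an absolute maximal of $I$; the induction hypothesis applied to $\beta$ gives $\beta\in\N g_1+\cdots+\N g_q+\alpha_i$ for some $i$, and the auxiliary claim applied to $\alpha$ gives $\alpha\in\N g_1+\cdots+\N g_q$. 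Adding these, $v\in\N g_1+\cdots+\N g_q+\alpha_i$, which is what we want.

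I do not expect a genuine obstacle here: the entire content sits in the preceding proposition, and the only point to check is that the two inductions terminate, which is exactly what the inclusions $\val(R)\subseteq\N^p$ and $\val(I)\subseteq\lambda+\N^p$ guarantee; in particular one does not need the (valid) finiteness of the set of absolute maximals, which anyway follows from Remark~\ref{remar:max:finite} since every absolute maximal is a maximal. The only thing deserving care is the bookkeeping in the case $I=R$, where ``irreducible absolute maximal of $R$'' must be read as an absolute maximal of $R$ which is not a sum of two nonzero elements of $\val(R)$; by the preceding proposition this is precisely the notion that keeps the above descent strict.
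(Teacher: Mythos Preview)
Your argument is correct and is exactly the descent the paper has in mind when it declares the proposition a ``direct consequence of the definition'' (the paper gives no further proof). Your careful remark about the case $I=R$ is well taken: read literally, the definition of ``irreducible'' would make every nonzero element of $\val(R)$ reducible via $v=v+0$, so one must indeed interpret ``irreducible absolute maximal of $R$'' as you do, namely not a sum of two nonzero elements of $\val(R)$; with that reading the descent is strict and your induction on $|\cdot|$ terminates as claimed.
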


To apply the generation theorem~\ref{theo:gen} to a fractional ideal $I$, we need the absolute maximals of $I^\vee$. 

\begin{nota}
Let $\lra{g_1,\ldots,g_q}$ be the set of irreducible absolute maximals of $R$, and $\lra{\beta_1,\ldots,\beta_s}$ be the set of irreducible absolute maximals of the ideal $I^\vee$. Let $\nu_{I^\vee}\in\Z^p$ be such that $\nu_{I^\vee}+\N^p\subseteq \val(I^\vee)$. We set $$F=\lra{\sum_{j=1}^q \lambda_jg_j+\beta_i \big\vert i\in\lra{1,\ldots,s}, \forall j, \lambda_j\in\N \text{ and } \sum_{j=1}^q \lambda_j g_j+\beta_i\leqslant\nu_{I^\vee}-\underline{1}}\subseteq \val(I^\vee).$$

Since $\nu_{I^\vee}+\N^p\subseteq \val(I^\vee)$, one can notice that if $\beta$ is an absolute maximal of $I^\vee$, then $\beta\leqslant \nu_{I^\vee}-\undun$.

We also set $F'=\lra{\gamma-u-\underline{1}\vert u\in F}$, where $\gamma$ is the conductor of $R$.
\end{nota}

We therefore have the following result:

\begin{prop}
Let $v\in\Z^p$ be such that for all $J\subseteq \unp$ with $|J|=p-1$, we have $pr_J(v)\in pr_J(\val(I))$. Then:
$$v\in \val(I) \iff \forall w\in F', v\notin \Delta(w,\Z^p).$$
\end{prop}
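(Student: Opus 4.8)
The plan is to derive this proposition directly from the generation theorem~\ref{theo:gen} together with the symmetry theorem~\ref{intro:theo:sym-max}. The strategy is to identify $F'$ with the set of relative maximals of $I$, so that the claimed equivalence is exactly the conclusion of theorem~\ref{theo:gen} applied to $I$. First I would recall that by theorem~\ref{intro:theo:sym-max}, an element $\alpha\in\val(I)$ with $\alpha+\beta=\gamma-\undun$ is a relative maximal of $I$ if and only if $\beta\in\val(I^\vee)$ is an absolute maximal of $I^\vee$; hence the set $RM(I)$ of relative maximals of $I$ is precisely $\lra{\gamma-\beta-\undun\mid \beta\text{ is an absolute maximal of }I^\vee}$. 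Since $F$ is the set of absolute maximals of $I^\vee$ expressed via irreducible absolute maximals, and $F'=\lra{\gamma-u-\undun\mid u\in F}$, it follows that $F'$ is exactly the set of relative maximals of $I$ (using the preceding two propositions, which give that every absolute maximal of $I^\vee$ lies in some $\N g_1+\cdots+\N g_q+\beta_i$, and the remark that absolute maximals $\beta$ of $I^\vee$ satisfy $\beta\leqslant\nu_{I^\vee}-\undun$, so the bound in the definition of $F$ loses nothing).

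The core of the argument is therefore the identification $F'=RM(I)$; once that is in place, the equivalence
\[
v\in\val(I)\iff \forall w\in F',\ v\notin\Delta(w,\Z^p)
\]
is immediate from theorem~\ref{theo:gen}, under the standing hypothesis that $pr_J(v)\in pr_J(\val(I))$ for all $J$ with $|J|=p-1$. I would write the proof essentially as: ``By theorem~\ref{intro:theo:sym-max} and the definition of $F'$, the set $F'$ equals the set $RM(I)$ of relative maximals of $I$. The conclusion then follows from theorem~\ref{theo:gen}.''

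The one point that requires a little care — and the main (mild) obstacle — is checking that $F$ genuinely contains \emph{every} absolute maximal of $I^\vee$, not merely the irreducible ones combined with arbitrary sums of irreducible absolute maximals of $R$. Here one uses that if $\beta$ is an absolute maximal of $I^\vee$, then $\beta\leqslant\nu_{I^\vee}-\undun$ (the remark in the preceding notation block), so that when one decomposes $\beta$ as $\sum\lambda_j g_j+\beta_i$ via the proposition on irreducible absolute maximals, the summand automatically satisfies the inequality $\sum\lambda_j g_j+\beta_i\leqslant\nu_{I^\vee}-\undun$ appearing in the definition of $F$. Conversely every element of $F$ of the form $\sum\lambda_jg_j+\beta_i$ is an absolute maximal of $I^\vee$ (again by the propositions on irreducible absolute maximals), so $F$ is precisely the set of absolute maximals of $I^\vee$, and applying $u\mapsto\gamma-u-\undun$ and theorem~\ref{intro:theo:sym-max} gives $F'=RM(I)$ as needed.
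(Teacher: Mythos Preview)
Your reduction of the backward implication to theorem~\ref{theo:gen} via $RM(I)\subseteq F'$ is correct: since every absolute maximal of $I^\vee$ lies in $F$ (the bound $\leqslant\nu_{I^\vee}-\undun$ being automatic), the symmetry theorem~\ref{intro:theo:sym-max} gives $RM(I)\subseteq F'$, and then ``$v\notin\Delta(w,\Z^p)$ for all $w\in F'$'' implies the same for all $w\in RM(I)$, so theorem~\ref{theo:gen} yields $v\in\val(I)$. This is exactly what the paper does for that direction.

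The gap is in the forward implication. You claim that \emph{every} element of $F$ is an absolute maximal of $I^\vee$, citing ``the propositions on irreducible absolute maximals''. But those propositions only establish the opposite direction: if a sum $\alpha+\beta$ \emph{is} an absolute maximal, then each summand is. They do \emph{not} say that a sum $\sum\lambda_jg_j+\beta_i$ of absolute maximals of $R$ and an absolute maximal of $I^\vee$ is again an absolute maximal of $I^\vee$; nothing in the paper proves this, and there is no reason for it to hold in general. Hence one only has $RM(I)\subseteq F'$, not equality, and theorem~\ref{theo:gen} alone cannot give the forward implication.

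The paper handles the forward direction differently and without needing $F'=RM(I)$. It uses only the much weaker fact that $F\subseteq\val(I^\vee)$ (clear, since each $\sum\lambda_jg_j+\beta_i$ is a value of $I^\vee$). Then for $v\in\val(I)$ and $w\in F'$ with, say, $v\in\Delta_1(w,\Z^p)$, the symmetry theorem~\ref{symmetry-values} gives $\Delta(\gamma-v-\undun,I^\vee)=\emptyset$, while $\gamma-w-\undun\in F\subseteq\val(I^\vee)$ lies in $\Delta_1(\gamma-v-\undun,I^\vee)$, a contradiction. So the forward direction really uses theorem~\ref{symmetry-values} rather than theorem~\ref{theo:gen}.
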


\begin{proof}
Let us assume that $v\in\val(I)$ and that there exists $w\in F'$ such that $v\in{\Delta}(w,\Z^p)$. For example, we may assume that $v\in\Delta_1(w,\Z^p)$. Since $v\in\val(I)$, by theorem~\ref{symmetry-values}, $\Delta(\gamma-v-\underline{1},I^\vee)=\emptyset$. We have $(\gamma-v-\undun)_1=\gamma_1-w_1-1$ and for all $j\geqslant 2$, $(\gamma-v-\undun)_j<\gamma_j-w_j-1$. In addition, $\gamma-w-\undun\in\val(I^\vee)$ by definition of $F$ and $F'$. Thus $\gamma-w-\undun\in \Delta_1(\gamma-v-\undun,I^\vee)$, which is a contradiction. 

The other implication is a consequence of theorem~\ref{theo:gen}. Indeed, the set of absolute maximals of $I^\vee$ is contained in $F$, and by theorem~\ref{intro:theo:sym-max}, the  set of relative maximals of $I$ is contained in $F'$. 
\end{proof}

\subsection{Irreducible absolute maximals of an ideal}
\label{subsection:irred:abs:max}

The remaining problem would be to determine the absolute maximals of $I^\vee$. 

When $C$ is a plane curve, the set of irreducible absolute maximals of $\co_C$ coincide with the values of maximal contact which are finite, see \cite{delgado2} for more details. 

We identify here a subset of the set of irreducible absolute maximals of a fractional ideal $I$ of a reduced reducible germ of curve $C\subseteq (\C^m,0)$. Let $p$ be the number of irreducible components of $C$.

The set of values of a fractional ideal of an irreducible curve can be deduced from a standard basis. An algorithm computing the standard basis of an ideal is given in \cite{hefez-standard}. 

\begin{de}[\protect{\cite[Definition 2.1]{hefez-standard}}]
\label{de:base:standard} Let $\mc{G}=\lra{g_1,\ldots,g_s}\subseteq \co_C$. 
\begin{itemize}
\item A $\mc{G}$-product is an element of the form $\prod_{i=1}^s g_i^{\alpha_i}$ where for all $i\in\lra{1,\ldots,s}$, $\alpha_i\in\N$. 
\item The set $\mc{G}$ is called a \emph{standard basis} of $\co_C$ if for all $f\in \co_C$, there exists a $\mc{G}$-product $g$ such that $\val(g)=\val(f)$. In other words, $$\val(g_1)\N+\cdots+\val(g_s)\N=\val(\co_C).$$
\item Let $\mc{H}\subseteq I$. The couple $(\mc{H},\mc{G})$ is called a \emph{standard basis} of $I$ if $\mc{G}$ is a standard basis of $\co_C$ and if for all $f\in I$, there exist $h\in \mc{H}$ and a $\mc{G}$-product $g$ such that $\val(f)=\val(g)+\val(h)$.
\item Let $(\mc{H},\mc{G})$ be a standard basis of $I$. We say that $\mc{H}$  is \emph{minimal} if for all $h\neq h'\in\mc{H}$ we have $\val(h')\notin \val(\co_C)+\val(h)$.
\end{itemize}
\end{de}

\begin{remar}
Let $(\mc{H},\mc{G})$ be a standard basis of $I$. If there exists $h\neq h'\in\mc{H}$ such that $\val(h')\in\val(\co_C)+\val(h)$, then there is a $\mc{G}$-product $g$ such that $\val(h')=\val(g)+\val(h)$. Then $(\mc{H}\backslash\lra{h'},\mc{G})$ is a also  a standard basis of $I$. By iterating this process, one can deduce a standard basis $(\mc{H}',\mc{G})$ where $\mc{H}'$ is minimal.  
\end{remar}
\begin{nota}

Let for $i\in \unp$, $\pi_i : \mathrm{Frac}(\co_C)\to \mathrm{Frac}(\co_{C_i})$ be the natural surjection. We set $I_i=\pi_i(I)\subseteq \mathrm{Frac}(\co_{C_i})$.
 
For all $i\in\unp$, let us consider a standard basis $(\mc{H}^i,\mc{G}^i)$ of $I_i$ where $\mc{H}^i$ is minimal, and let us write $\mc{H}^i=\lra{H_0^i,\ldots,H_{s_i}^i}$ and $\mc{G}^i=\lra{g_0^i,\ldots,g_{r_i}^i}$. For all $j\in\lra{0,\ldots,s_i}$, let $h^i_{j,i}=\val_i(H^i_j)\in\Z$.%, and for all $j\in\lra{0,\ldots,r_i}$, let $\beta_j^i=\val_i(g_{j,i}^i)\in\N$. 
\end{nota}

Let us fix $\nu\in\val(I)$ such that $\nu+\N^p\subseteq \val(I)$. We do not assume that $\nu$ is the conductor of $I$. 

\begin{nota}
For all $i\in\unp$ and for all $j\in\lra{0,\ldots,s_i}$, we set $$E_j^i(\nu)=\lra{v\in\val(I)\vert v_i=h_{j,i}^i, \forall \ell\neq i, v_\ell\leqslant \nu_\ell}.$$
\end{nota}

\begin{lem}
\label{lem:irred}
Let $\alpha\in E_j^i(\nu)$. Then $\alpha$ is an irreducible element. 
\end{lem}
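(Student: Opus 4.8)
The plan is to take an arbitrary decomposition $\alpha=a+b$ with $a\in\val(\co_C)$ and $b\in\val(I)$ and to prove that necessarily $a=0$; by definition this is exactly what it means for $\alpha$ to be irreducible. Write $a=\val(f)$ and $b=\val(h)$ for non-zero-divisors $f\in\co_C$ and $h\in I$. The argument will have two stages: first use the standard basis $(\mc{H}^i,\mc{G}^i)$ of $I_i$ and its minimality to force the $i$-th coordinate $a_i$ to vanish, and then use that $\co_C$ is a local ring all of whose branches pass through the origin to upgrade $a_i=0$ to $a=0$.

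For the first stage I would apply $\pi_i$. Since $f$ and $h$ are non-zero-divisors, $\pi_i(f)\in\co_{C_i}$ and $\pi_i(h)\in I_i$ are nonzero, so $a_i=\val_i(f)\in\val(\co_{C_i})$ and $b_i=\val_i(h)\in\val(I_i)$. Applying the standard basis property of $(\mc{H}^i,\mc{G}^i)$ to a nonzero element of $I_i$ realizing the value $b_i$, there are $k\in\lra{0,\ldots,s_i}$ and a $\mc{G}^i$-product $g$ with $b_i=\val_i(g)+h^i_{k,i}$, and $\val_i(g)\in\val(\co_{C_i})$ because $g\in\co_{C_i}$. Then
$$h^i_{j,i}=\alpha_i=a_i+b_i=\big(a_i+\val_i(g)\big)+h^i_{k,i},$$
with $a_i+\val_i(g)\in\val(\co_{C_i})$, hence $h^i_{j,i}\in\val(\co_{C_i})+h^i_{k,i}$. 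Minimality of $\mc{H}^i$ forces $k=j$, so $a_i+\val_i(g)=0$; since $a_i$ and $\val_i(g)$ are non-negative integers (valuations of elements of $\co_{C_i}\subseteq\C\lra{t_i}$), we conclude $a_i=\val_i(f)=0$.

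For the second stage, $\val_i(f)=0$ means that $\pi_i(f)$ lies outside the maximal ideal of $\co_{C_i}$; since $\pi_i\colon\co_C\to\co_{C_i}$ is a local homomorphism, $f$ lies outside the maximal ideal $\m$ of the local ring $\co_C$, hence $f$ is a unit in $\co_C$. Therefore $\val_\ell(f)=0$ for every $\ell\in\unp$, i.e. $a=\val(f)=0$, which shows that $\alpha$ is irreducible.

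The step I expect to require the most care is the first stage: one must correctly combine the existence of the standard-basis decomposition $b_i=\val_i(g)+h^i_{k,i}$ with the precise meaning of minimality of $\mc{H}^i$ to conclude $k=j$, and keep track of which ring a given element is a unit in. Everything else is formal. (Note that the constraint $v_\ell\leqslant\nu_\ell$ for $\ell\neq i$ in the definition of $E_j^i(\nu)$ plays no role here: only $\alpha\in\val(I)$ together with $\alpha_i=h^i_{j,i}$ is used.)
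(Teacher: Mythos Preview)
Your proof is correct and follows the same approach as the paper: project to the $i$-th branch, use minimality of $\mc{H}^i$ to force $a_i=0$, then use that $\co_C$ is local to conclude $a=0$. You spell out in more detail the intermediate step of decomposing $b_i$ via the standard basis and the local-homomorphism argument, whereas the paper compresses the first into a one-line appeal to minimality and replaces the second by a citation to \cite[(1.1.1)]{delgado}, but the substance is identical.
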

\begin{proof}
Let us assume that $\alpha=a+h$ with $a\in\val(\co_C)$ and $h\in\val(I)$. In particular, $h_{j,i}^i=a_i+h_i$ with $a_i\in\val_i(\co_{C_i})$ and $h_i\in\val_i(I_i)$. Since $\mc{H}^i$ is minimal we have $a_i=0$ and $h_i=h_{j,i}^i$. Therefore, since $\co_C$ is a local ring, one has $a=0$ (see \cite[(1.1.1)]{delgado}). Therefore, $\alpha$ is irreducible. 
\end{proof}

\begin{prop}
\label{prop:max}
Let $\alpha\in E_j^i(\nu)$. Then $\alpha$ is an irreducible absolute maximal of $I$ if and only if for all $\ell\in\unp, \alpha_\ell\neq \nu_\ell$ and $\alpha$ is a maximal element of $E_j^i(\nu)$ for the product order of $\Z^p$.  
\end{prop}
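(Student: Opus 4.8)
\textbf{Proof plan for Proposition~\ref{prop:max}.} The plan is to prove the two implications separately, using that $\alpha$ is already known to be irreducible by Lemma~\ref{lem:irred}, and that the defining condition $v_i = h_{j,i}^i$ together with the upper bounds $v_\ell \leqslant \nu_\ell$ for $\ell \neq i$ controls the possible directions in which one can move inside $\val(I)$.

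\emph{Necessity.} Suppose $\alpha \in E_j^i(\nu)$ is an irreducible absolute maximal of $I$. First I would show that $\alpha_\ell \neq \nu_\ell$ for all $\ell \in \unp$. For $\ell = i$ this is because $\alpha_i = h_{j,i}^i < \nu_i$: indeed $\nu + \N^p \subseteq \val(I)$ forces $\nu_i$ to be at least the conductor value of $\val_i(I_i)$, while $h_{j,i}^i$ is the value of a standard-basis generator $H_j^i$, hence lies strictly below the conductor (otherwise $H_j^i$ would be redundant, contradicting minimality of $\mc{H}^i$). For $\ell \neq i$, if $\alpha_\ell = \nu_\ell$, then since $\alpha \in \val(I)$ and also $\nu \in \val(I)$ agree in coordinate $\ell$ and differ (they differ in coordinate $i$), Proposition~\ref{a:prop:valquimonte} applied to the triple $(\alpha, \nu, \ell)$ would produce $\eta \in \val(I)$ with $\eta_\ell > \alpha_\ell$ and $\eta_k \geqslant \min(\alpha_k, \nu_k)$ for all $k$; tracing coordinates shows $\eta \in \Delta_{\{i\}}(\alpha, I)$ (here one uses $\alpha_k < \nu_k$ in the coordinates where they differ, together with $\alpha_i < \nu_i$), contradicting that $\alpha$ is an absolute maximal. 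Next, maximality of $\alpha$ in $E_j^i(\nu)$: if some $\alpha' \in E_j^i(\nu)$ satisfied $\alpha' \geqslant \alpha$, $\alpha' \neq \alpha$, then $\alpha'_i = \alpha_i$ and $\alpha'_k > \alpha_k$ for the coordinates where they differ, so $\alpha' \in \Delta_{J}(\alpha, I)$ for some $J \ni i$ with $J \neq \unp$ — again contradicting the absolute-maximal property, unless $J = \unp$, which is excluded since $\alpha' \neq \alpha$ and $\alpha'_i = \alpha_i$.

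\emph{Sufficiency.} Conversely, suppose $\alpha \in E_j^i(\nu)$ satisfies $\alpha_\ell \neq \nu_\ell$ for all $\ell$ and is a maximal element of $E_j^i(\nu)$. Irreducibility is Lemma~\ref{lem:irred}, so it remains to show $\alpha$ is an absolute maximal, i.e.\ $\Delta_A(\alpha, I) = \emptyset$ for every $A$ with $\emptyset \neq A \neq \unp$. Suppose $\eta \in \Delta_A(\alpha, I)$. I distinguish two cases according to whether $i \in A$. If $i \in A$, then $\eta_i = \alpha_i = h_{j,i}^i$, and I need to adjust $\eta$ to again satisfy the bounds $\eta_\ell \leqslant \nu_\ell$ for $\ell \neq i$ in order to land in $E_j^i(\nu)$ and contradict maximality; for coordinates $\ell \in A$ this holds automatically ($\eta_\ell = \alpha_\ell < \nu_\ell$), and for $\ell \notin A \cup \{i\}$ where $\eta_\ell$ may exceed $\nu_\ell$, I replace $\eta$ by $\inf(\eta, \zeta)$ where $\zeta \in \val(I)$ is chosen with $\zeta_\ell \leqslant \nu_\ell$ (such a $\zeta$ exists, e.g.\ using that $\val(I) \subseteq \lambda + \N^p$ and $\val(I) \supseteq \nu + \N^p$; take a point of $E_j^i(\nu)$, which is nonempty as it contains $\alpha$, or more carefully build one coordinate at a time via Proposition~\ref{a:prop:valquimonte}); by Proposition~\ref{a:prop:inf} the infimum stays in $\val(I)$, its $i$-th coordinate is still $h_{j,i}^i$, and it still strictly dominates $\alpha$ in the coordinates of $A \setminus \{i\}$, giving an element of $E_j^i(\nu)$ strictly above $\alpha$ — contradiction. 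If $i \notin A$, then $\eta_i > \alpha_i$ while $\eta_i$ can be as large as one likes; here I use $i \notin A$ together with Proposition~\ref{a:prop:valquimonte} applied to the triple $(\eta, \alpha, j_0)$ for some fixed $j_0 \in A$ (legal since $\eta_{j_0} = \alpha_{j_0}$ and $\eta \neq \alpha$) to push down the $i$-th coordinate: this yields $u \in \val(I)$ with $u_i = \alpha_i$, $u_{j_0} > \alpha_{j_0}$, and $u_k \geqslant \min(\eta_k, \alpha_k)$ for the rest, and after possibly intersecting with a point of $E_j^i(\nu)$ as above to restore the $\nu$-bounds, $u$ again contradicts maximality of $\alpha$ in $E_j^i(\nu)$.

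\emph{Main obstacle.} The delicate point is the bookkeeping in the sufficiency direction: whenever Proposition~\ref{a:prop:valquimonte} is invoked it only guarantees inequalities $\geqslant \min(\cdot,\cdot)$ in the uncontrolled coordinates, so one must repeatedly intersect (via Proposition~\ref{a:prop:inf}) with suitably chosen elements of $\val(I)$ lying below $\nu$ in the relevant coordinates to stay inside $E_j^i(\nu)$, while simultaneously keeping the $i$-th coordinate pinned at $h_{j,i}^i$ and preserving strict increase in at least one coordinate relative to $\alpha$. Making sure all these constraints can be met simultaneously — in particular that the auxiliary element $\zeta$ with $\zeta_\ell \leqslant \nu_\ell$ for the troublesome $\ell$ and $\zeta_i = h_{j,i}^i$ actually exists — is where the hypotheses $\alpha_\ell \neq \nu_\ell$ (which give strict inequalities to work with) and the minimality of the standard basis $\mc{H}^i$ are essential, and this is the step I expect to require the most care.
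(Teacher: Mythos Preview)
Your overall strategy matches the paper's, but there is one genuine error and one unnecessary complication.

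\textbf{The error (necessity, case $\ell=i$).} Your claim that a minimal standard-basis value $h_{j,i}^i$ must lie strictly below the conductor of $\val_i(I_i)$ is false. For example, take $\val_i(\co_{C_i})=\langle 2,5\rangle=\{0,2,4,5,6,\ldots\}$ and $\val_i(I_i)=\{3,5,6,7,\ldots\}$. The minimal generating values of $\val_i(I_i)$ over $\val_i(\co_{C_i})$ are $\{3,6\}$ (since $6-3=3\notin\langle 2,5\rangle$), while the conductor of $\val_i(I_i)$ is $5$; so $6>5$. Hence you cannot conclude $\alpha_i<\nu_i$ this way, and your argument for $\ell\neq i$ then also breaks, because the application of Proposition~\ref{a:prop:valquimonte} to $(\alpha,\nu,\ell)$ may return $\eta$ with $\eta_i=\min(\alpha_i,\nu_i)<\alpha_i$, which lies in no $\Delta_J(\alpha,I)$. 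The paper avoids this entirely with a uniform argument: if $\alpha_\ell=\nu_\ell$ for some $\ell$, use $t^\nu\co_{\wt{C}}\subseteq I$ to cancel the leading term of the $\ell$-th component of a representative of $\alpha$, producing an element of $\Delta_{\{1,\ldots,p\}\setminus\{\ell\}}(\alpha,I)$ directly.

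\textbf{The complication (sufficiency).} Your ``main obstacle'' --- finding an auxiliary $\zeta\in\val(I)$ with $\zeta_i=h_{j,i}^i$ and $\zeta_\ell\leqslant\nu_\ell$ to intersect with --- disappears once you notice that $\nu$ itself belongs to $\val(I)$. In the case $i\in A$ the paper simply sets $w':=\inf(w,\nu)$; then $w'_\ell\leqslant\nu_\ell$ for all $\ell$, $w'_i=\min(\alpha_i,\nu_i)=\alpha_i=h_{j,i}^i$, and $w'\geqslant\alpha$ with $w'\neq\alpha$ (using $\alpha_\ell<\nu_\ell$), contradicting maximality in $E_j^i(\nu)$. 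In the case $i\notin A$ the paper applies Proposition~\ref{a:prop:valquimonte} to $(\alpha,w,\ell)$ for some $\ell\in A$ (exactly as you do) to get $v$ with $v_i=\alpha_i$, and then again takes $\inf(v,\nu)$. No further bookkeeping is needed.
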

\begin{proof}
Let $\alpha\in E_j^i(\nu)$. If $\alpha$ is not a maximal element of $E_j^i(\nu)$ for the product order, then there exists $\alpha'\in E_j^i(\nu)$ such that $\alpha'\geqslant\alpha$, $\alpha'\neq \alpha$ and $\alpha'_i=\alpha_i$, so that $\alpha'\in\Delta_J(\alpha,I)$ for a subset $J\subseteq\unp$ with $J\neq \emptyset$ and $J\neq\unp$. Thus, $\alpha$ is not an absolute maximal of $I$.

Let us assume that there exists $\ell\in\unp$ such that $\alpha_\ell=\nu_\ell$. We define $\alpha'\in\Z^p$ such that for all $q\neq \ell$, $\alpha'_q=\alpha_q$ and $\alpha'_\ell=\alpha_\ell+1$. Then, since $t^\nu\co_{\wt{C}}\subseteq I$, $\alpha'\in\val(I)$ so that $\alpha'\in\Delta_{\unp\backslash\lra{\ell}}(\alpha,I)$ and $\alpha$ is not an absolute maximal of $I$. 

Let us prove the converse implication. Let $\alpha\in E_j^i(\nu)$ be such that for all $\ell\in\unp, \alpha_\ell\neq \nu_\ell$ and $\alpha$ is a maximal element of $E_j^i(\nu)$. Let us prove that $\alpha$ is an irreducible absolute maximal of $I$. By lemma~\ref{lem:irred}, $\alpha$ is irreducible. Let us assume that $\alpha$ is not an absolute maximal of $I$. There exists $J\subseteq \unp$, $J\neq\emptyset$ and $J\neq \unp$ such that $\Delta_J(\alpha,I)\neq \emptyset$. Let $w\in\Delta_J(\alpha,I)$.

If $i\in J$, then by proposition~\ref{a:prop:inf}, $w':=\inf(\nu,w)\in E_j^i(\nu)$, $w'\geqslant \alpha$, and $w'\neq \alpha$, which contradicts the maximality of $\alpha$. 

Thus, $i\notin J$. We then have $w_i>\alpha_i$, and there exists $\ell\in\unp$, $\ell\neq i$, such that $\alpha_\ell=w_\ell$. Let us apply proposition~\ref{a:prop:valquimonte} to the triple $(\alpha,w,\ell)$. There exists $v\in\val(I)$ such that $v_i=\alpha_i$, $v_\ell>\alpha_\ell$, and for all $q\notin \lra{i,\ell}$, $v_q\geqslant \min(\alpha_q,w_q)\geqslant\alpha_q$. Therefore, by proposition~\ref{a:prop:inf}, $v':=\inf(v,\nu)\in E_j^i(\nu)$, $v'\geqslant\alpha$ and $v'_\ell>\alpha_\ell$, which contradicts the fact that $\alpha$ is a maximal element in $E_j^i(\nu)$. Hence the result.
\end{proof}

\begin{lem}
The set of the elements $\alpha\in E_j^i(\nu)$ such that for all $\ell\in\unp, \alpha_\ell\neq \nu_\ell$ and $\alpha$ is a maximal element of $E_j^i(\nu)$ does not depend on the choice of an element $\nu$ satisfying $\nu+\N^p\subseteq \val(I)$.
\end{lem}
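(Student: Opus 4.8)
The plan is to show that the set in question — call it $M_j^i(\nu)$, the maximal elements $\alpha$ of $E_j^i(\nu)$ with $\alpha_\ell\neq\nu_\ell$ for all $\ell$ — is characterized intrinsically by Proposition~\ref{prop:max}, and that this characterization makes no reference to $\nu$. Indeed, by Proposition~\ref{prop:max}, $\alpha\in M_j^i(\nu)$ if and only if $\alpha$ is an irreducible absolute maximal of $I$ with $\val_i(\alpha)=h_{j,i}^i$. The right-hand condition depends only on $I$, on the branch index $i$, and on the standard basis element $H_j^i$ (equivalently on the integer $h_{j,i}^i$), none of which involves $\nu$. Hence $M_j^i(\nu)$ is independent of the choice of $\nu$, which is exactly the assertion. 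So the whole content is already packaged in Proposition~\ref{prop:max}, and the proof is essentially a one-line invocation of it.

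First I would fix two admissible choices $\nu,\nu'\in\val(I)$ with $\nu+\N^p\subseteq\val(I)$ and $\nu'+\N^p\subseteq\val(I)$. Let $\alpha\in M_j^i(\nu)$. By Proposition~\ref{prop:max}, $\alpha$ is an irreducible absolute maximal of $I$; also by Proposition~\ref{prop:max}, since $\alpha$ is an irreducible absolute maximal of $I$ lying in $E_j^i(\nu')$ — and $\alpha\in E_j^i(\nu')$ is automatic, because $\alpha$ is a maximal of $I$, hence $\Delta(\alpha,I)=\emptyset$, and by Remark~\ref{remar:max:finite} $\alpha\infe\nu$-type bounds force $\alpha_\ell\infe\nu'_\ell$ for every $\ell$, and $\val_i(\alpha)=h_{j,i}^i$ by hypothesis — it follows that $\alpha\in M_j^i(\nu')$. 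By symmetry of $\nu$ and $\nu'$ we get $M_j^i(\nu)=M_j^i(\nu')$, which is the claim.

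The one point that needs a little care, and which I expect to be the main (minor) obstacle, is justifying the inclusion $M_j^i(\nu)\subseteq E_j^i(\nu')$: one must check that an element $\alpha$ which is a maximal of $I$ automatically satisfies $\alpha_\ell\leqslant\nu'_\ell$ for every $\ell$. This is precisely the argument recorded in Remark~\ref{remar:max:finite}: if some coordinate $\alpha_\ell$ were $\supe\nu'_\ell$, then $\nu'+\N^p\subseteq\val(I)$ together with Proposition~\ref{a:prop:inf} would produce an element of $\Delta(\alpha,I)$, contradicting $\Delta(\alpha,I)=\emptyset$. Combined with $\val_i(\alpha)=h_{j,i}^i$, this gives $\alpha\in E_j^i(\nu')$, and then Proposition~\ref{prop:max} applied with $\nu'$ in place of $\nu$ finishes the argument. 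Everything else is a direct reading of Proposition~\ref{prop:max}, so no genuinely new estimate is required.
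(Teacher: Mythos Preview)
Your argument is correct, and it takes a different and cleaner route than the paper's own proof.

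The paper proves the lemma directly, without invoking Proposition~\ref{prop:max}. Given two admissible $\nu,\nu'$ and $\alpha\in M_j^i(\nu)$, it first shows $\alpha_\ell<\nu'_\ell$ for all $\ell$ by an explicit construction: assuming some $\alpha_\ell\geqslant\nu'_\ell$, it builds an element $f-g\in I$ (using $t^{\nu'}\co_{\wt{C}}\subseteq I$) whose value lies in $E_j^i(\nu)$ and strictly dominates $\alpha$, contradicting maximality. It then shows $\alpha$ is maximal in $E_j^i(\nu')$ by taking any $v\in E_j^i(\nu')$ with $v\geqslant\alpha$, $v\neq\alpha$, and observing that $\inf(v,\nu)\in E_j^i(\nu)$ again contradicts maximality of $\alpha$ there.

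Your approach instead recognises that Proposition~\ref{prop:max} already gives an intrinsic description: $M_j^i(\nu)$ is exactly the set of irreducible absolute maximals of $I$ whose $i$-th coordinate equals $h_{j,i}^i$. The only thing to check is that any such absolute maximal automatically lies in $E_j^i(\nu')$; since an absolute maximal is in particular a maximal (each $\Delta_{\{i\}}(\alpha,I)=\emptyset$, hence $\Delta(\alpha,I)=\emptyset$), the argument behind Remark~\ref{remar:max:finite} applied to $\nu'$ gives $\alpha_\ell<\nu'_\ell$ for every $\ell$, and you are done. Proposition~\ref{prop:max} then yields $\alpha\in M_j^i(\nu')$ directly, without the element-level constructions the paper carries out. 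Your route is shorter and makes the independence of $\nu$ conceptually transparent; the paper's hands-on proof has the minor advantage of being self-contained and of not re-using both directions of Proposition~\ref{prop:max}.
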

\begin{proof}
Let $\nu, \nu'$ be such that $\nu+\N^p\subseteq \val(I)$ and $\nu'+\N^p\subseteq \val(I^\vee)$. 
Let $\alpha\in E_j^i(\nu)$ be such that for all $\ell\in\unp, \alpha_\ell\neq \nu_\ell$ and $\alpha$ is a maximal element of $E_j^i(\nu)$. Let us prove that $\alpha\in E_j^i(\nu')$. Let us set $\alpha'=\inf(\alpha, \nu')$. Let us prove that for all $\ell\in\unp, \alpha'_\ell\neq\nu'_\ell$. Let us assume that there exists $L\subseteq \unp$, $L\neq \emptyset$, such that for all $\ell\in L$, $\alpha'_\ell=\nu'_\ell$. In particular, it means that for all $\ell\in L$, $\nu'_\ell<\nu_\ell$. Let $f\in I$ be such that $\val(f)=\alpha'$. Since $t^{\nu'}\co_{\wt{C}}\subseteq I$, one can choose an element $g\in I$ such that the restriction $g|_{C_q}$ of $g$ to $C_q$ for $q\notin L$ is zero, and for all $\ell\in L$, $g|_{C_\ell}=f|_{C_\ell}+t_\ell^{\nu_\ell}$, so that for $q\notin L$, $\val_q(f-g)=\alpha_q$ and for $\ell\in L$ $\val_\ell(f-g)=\nu_\ell$. Then $\val(f-g)\in E_j^i(\nu)$ and $\alpha\leqslant \val(f-g)$, which contradicts the maximality of $\alpha$ in $E_j^i(\nu)$. Therefore, for all $q\in\unp$, $\alpha_q<\nu_q'$. In particular, $\alpha\in E_j^i(\nu')$. 

It remains to prove that $\alpha$ is a maximal element of $E_j^i(\nu')$. Let us assume that $\alpha$ is not a maximal element in $E_j^i(\nu')$. Let $v\in E_j^i(\nu')$ be such that $v\geqslant \alpha$ and $v\neq \alpha$. Let $v'=\inf(v,\nu)$. Then $v'\in E_j^i(\nu)$, $v'\geqslant \alpha$ and since for all $j, \alpha_j\neq \nu_j$, $v'\neq \alpha$, it contradicts the maximality of $\alpha$ in $E_j^i(\nu)$.
\end{proof}

\begin{remar}
We identify in proposition~\ref{prop:max} the irreducible absolute maximals which have at least one coordinate equal to the valuation of an element in a standard basis of the ideal along the corresponding branch. It is not obvious that this property is satisfied for all irreducible absolute maximals of $I$. The computation of the irreducible absolute maximals in \cite{delgado2} relies on the Hamburger-Noether expansion of plane curves and elements of maximal contact which are  particular to the semigroup of a plane curve. 
\end{remar}

\bibliographystyle{alpha-fr}
\bibliography{bibli2}
\end{document}